\numberwithin{equation}{section}
\newtheorem{thm}{Theorem}[section]
\newtheorem{lem}[thm]{Lemma}
\newtheorem{Ex}{Example}[section]
\newtheorem{Def}[thm]{Definition}
\theoremstyle{definition}
\newtheorem{rem}[thm]{Remark}
\DeclareMathOperator{\DIV}{div}
\DeclareMathOperator{\sign}{sign}
\newcommand{\R}{{\mathbb{R}}}
\newcommand{\rp}{{[0,\infty)}}
\newcommand{\N}{\mathbb{N}}
\newcommand{\cA}{{\mathcal{A}}}
\newcommand{\Rmnum}[1]{\expandafter\@slowromancap\romannumeral #1@}
\title[A direct proof of existence of weak solutions to elliptic problems]{A direct proof of existence of weak solutions to fully anisotropic and inhomogeneous elliptic problems}
\author{Iwona Chlebicka, Arttu Karppinen, and Ying Li}
\address{Iwona Chlebicka\newline
Faculty of Mathematics, Informatics and Mechanics, University of Warsaw \newline
ul. Banacha 2, 02-097 Warsaw, Poland\newline
\texttt{i.chlebicka@mimuw.edu.pl}}
\address{Arttu Karppinen\newline
Faculty of Mathematics, Informatics and Mechanics, University of Warsaw \newline
ul. Banacha 2, 02-097 Warsaw, Poland\newline
\texttt{a.karppinen@uw.edu.pl}}
\address{Ying Li\newline
Department of Mathematics, Shanghai University, Shanghai 200444, China\newline
\texttt{lyinsh@shu.edu.cn}}
\thanks{{\bf Keywords}: existence, elliptic boundary value problems, second order partial differential equations, Musielak-Orlicz spaces}
\thanks{{\bf MSC 2010}: 35J25}
\thanks{{\bf Acknowledgements}: IC and AK are supported by NCN Grant Sonata Bis 2019/34/E/ST1/00120. YL is supported by China Scholarship Council, No.202106890038.}
\begin{document}

\maketitle

\begin{abstract}
    We provide a direct proof of existence and uniqueness of weak solutions to a broad family of strongly nonlinear elliptic equations with  {lower-order} terms. The leading part of the operator satisfies general growth conditions settling the problem in the framework of fully anisotropic and inhomogeneous Musielak--Orlicz spaces generated by an $N$-function $M:\Omega\times\R^d\to\rp$. Neither $\nabla_2$ nor $\Delta_2$ conditions are imposed on $M$. Our results cover among others problems with anisotropic polynomial, Orlicz, variable exponent, and double phase growth.
\end{abstract}

\section{Introduction}
In this paper we investigate the following strongly nonlinear problem
\begin{equation}\label{eq:main}
\left\{\begin{array}{cl}
-\DIV \big(\cA(x,\nabla u)+\Phi(u)\big)+b(x,u)= \DIV F &\qquad \mathrm{ in}\qquad  \Omega,\\
u(x)=0 &\qquad \mathrm{  on}\qquad \partial\Omega,
\end{array}\right.
\end{equation}
where $\Omega$ is a bounded Lipschitz domain in $ \R^d$, $d>1$.
The growth and coercivity of the vector field $\cA$ are assumed to be controlled by a generalized anisotropic $N$-function $M:\Omega\times\R^d\to\rp$, $\Phi\in L^\infty(\Omega,\R^d)$ is a~continuous vector field, $b$ is a function satisfying a sign condition without any extra growth conditions and the vector field $F$ has growth dictated by the conjugate function to $M$. The considered framework is described in~\cite{book}. We briefly recall the definitions of the involved $N$-functions, conjugates, and Musielak--Orlicz spaces in Section~\ref{sec:preliminaries}. In the present study we do not impose $\nabla_2$ nor $\Delta_2$ condition on $M$, but  we assume the modular density of smooth functions in the related Sobolev-type space that can be inferred from balanced behaviour of $M$. In this setting we show the existence of weak solutions by a direct reasoning that does not rely on a regularisation of isotropic equation. The result is supplied with a short proof of uniqueness. 

Study of nonlinear problems was initiated in 1960s by Browder~\cite{Bro}, Hess~\cite{H73}, Leray and Lions~\cite{LL} and many others. In the beginning the  considered operators were assumed to satisfy polynomial growth and coercivity conditions. The typical example of such an operator is the $p$-Laplacian $-\DIV\cA(x,\nabla u)=-\Delta_p u=-\DIV(|\nabla u|^{p-2}\nabla u)$ for $1<p<\infty$. Henceforth, the problem was stated in the reflexive setting of the classical Sobolev spaces. In recent years, there has been an increasing interest in the study of existence of solutions for elliptic problems of nonstandard growth. See~\cite{IC-pocket,over,MiRa} and references therein for a pretty complete account of new developments. Nowadays, the field of nonlinear elliptic boundary value problems aims to embrace investigations on problems admitting
\begin{itemize}
    \item general growth -- when the power function governing the growth of the operator is substituted by an $N$-function $M(x,\xi)=M (|\xi|)$, which do not necessarily satisfy the so-called $\Delta_2$-condition (being a necessary condition for an Orlicz space $L_M$ to be reflexive);
    \item inhomogeneity -- when the growth of the operator could be controlled by an $x$-dependent function e.g. $M (x, \xi) = |\xi|^{p(x)}$ (which results in the lack of the density of smooth functions in $L^{p(\cdot)}$, if $p(\cdot)$ is not regular enough);
    \item anisotropy -- when the growth of the operator is governed by a function depending on the full vector of $\xi$, not just its length $|\xi|$. For example, $M(x,\xi) = |\xi_1|^p \log(1+|\xi_1-\xi_2|)$ for $\xi = (\xi_1, \xi_2)$. 
\end{itemize} There are a lot of important ideas introduced to overcome the abovementioned challenges. The difficulty caused by the lack of reflexivity of $L_M$ under non-doubling regime was avoided by the idea of the complementary systems in Orlicz--Sobolev spaces. Contributions in this direction were initiated by Donaldson \cite{Don-71} and continued by Gossez \cite{Gos74,Gos79,Gos82} and Mustonen and Tienari \cite{MT}. We refer to \cite{ACCZG,BarCi,CN,GSG-2011} for analysis of problems in anisotropic Orlicz spaces governed by possibly fully anisotropic modular function, which is independent of the spacial variable. None of these contributions, however, provides a direct proof of existence to PDEs.  Note that the complementary systems are very homogeneous by their nature. In the typical inhomogeneous case of variable exponent problems, the setting fall into the realm of reflexive spaces, and consequently the general theory exposed e.g. in~\cite{KiSta} yields existence to the problems with regular data. 
In \cite{F12,HHK,LZ} isotropic, separable, and reflexive Musielak--Orlicz spaces are employed and \cite{DF} concerns separable, but not reflexive Musielak--Orlicz spaces. Existence to problems that are in the same time of general growth, inhomogeneous, and fully anisotropic were studied in~\cite{CGZ-19,book,dgk,GSZ-18,GSG-2008,H,LYZ,W}, but none of them provide a direct proof. Anisotropic problems with lower-order terms are less understood -- we can only refer to~\cite{DA11,GWWZ-2012}, but they do not cover our generality of the problem.
  \newline

 We are interested in a short and compact proofs of the existence and uniqueness of solutions to~\eqref{eq:main} under possibly general regime under the framework of~\cite{book}. Let us lay out our structural conditions for capturing inhomogeneous and anisotropic problems more precisely.\newline

{\bf Assumptions. } We consider a vector field $\cA:\Omega\times\R^d\to\R^d$~belonging to a Musielak--Orlicz class. Namely, we assume that a function $\cA:\Omega\times\R^d\to\R^d$ satisfies the following conditions:
\begin{itemize}
\item[(A1)] $\cA$ is a Carath\'eodory's function;
\item[(A2)] $\cA(x,0)=0$ for almost every $x \in \Omega$ and there exists an $N$-function  $M:\Omega\times\R^d\to\rp$ and  constants $c_1^\cA,c_2^\cA,c_3^\cA,c_4^\cA> 0$  such that for all $\xi\in\R^d$ we have
\[\cA(x,\xi)\cdot \xi\geq M(x,c_1^\cA\xi)-h_1(x)\]
and
\[c_2^\cA M^\ast(x,c_3^\cA \cA(x,\xi))\leq M(x,c_4^\cA\xi)+h_2(x),\]
where $M^\ast$ is the conjugate to $M$ and $h_1,h_2 \in L^1(\Omega)$;
\item[(A3)] For all $\xi,\eta\in\R^d$ and for almost every $x \in \Omega$ we have
\[(\cA(x,\xi) - \cA(x, \eta)) \cdot (\xi-\eta)\geq 0.\]
\end{itemize}
Moreover, we assume 
\begin{itemize}
\item[(P)]  $\Phi: \R \to \R^d$  {is} bounded and continuous; 
\item[(b)] $b:\Omega\times\R\to\R$ is a Carath\'eodory's function, which is {nondecreasing} with respect to the second variable, and such that  $b(\cdot,s)\in L^1(\Omega)$ and $b(\cdot,s)\sign(s)\geq 0$ for every $s\in\R$.
\end{itemize}

\medskip

We point out that we do not control the growth of $M$ with respect to the second variable by any kind of doubling condition or a power function. Instead, we rely on the density of smooth functions in a relevant function space  to study problem~\eqref{eq:main}, namely
\begin{equation*}
    V_0^1L_M(\Omega):=\{\phi\in W_{0}^{1,1}(\Omega):\ \ \nabla \phi\in L_M(\Omega)\}\,,
\end{equation*}
where $L_M(\Omega)$ is the inhomogeneous and fully anisotropic Musielak--Orlicz spaces defined in Section~\ref{sec:preliminaries}. The space $E_M$ is a closure in $L_M$-norm of the set of bounded functions. If $M$ does not satisfy growth condition of doubling type, $E_M$ is a proper subspace of $L_M$. There is a clear condition proven to be sufficient in~\cite{Bor-Chl}, but since our reasoning does not involve the condition in any other way, we assume density and recall the condition below the statement of our main theorem.  \newline

Our main theorem is an existence and uniqueness result for problem \eqref{eq:main}.

\begin{thm}\label{theo:main} Let $\Omega$ be a bounded Lipschitz domain in $\R^d$. Suppose that an $N$-function $M$ is regular enough so that the set of smooth functions is dense in $V^1_0L_M(\Omega)$  in the modular topology. Assume further that $F\in E_{M^\ast}(\Omega)$, $\cA$ satisfies assumptions (A1), (A2) and (A3), $\Phi$ satisfies (P), and $b$ satisfies (b). Then there exists at least one weak solution to the problem \eqref{eq:main}. Namely, there exists a function $u\in V^1_0L_M(\Omega)$ satisfying
\begin{equation*}
    \int_\Omega \cA(x,\nabla u) \cdot \nabla v + \Phi(u)\cdot \nabla v + b(x,u)v \, dx = \int_\Omega F \cdot \nabla v \, dx
\end{equation*}
for all $v \in V^1_0L_M(\Omega) \cap L^\infty(\Omega)$.

If, additionally, $s\mapsto b(\cdot,s)$ is strictly increasing and $\Phi$ is Lipschitz continuous, then the weak solution is unique.
\end{thm}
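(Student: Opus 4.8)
The plan is to approximate the strongly nonlinear problem by a sequence of regularized problems solvable by monotonicity methods, and then pass to the limit using the density of smooth functions in the modular topology to recover the weak formulation.

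First I would truncate the lower-order term $b$ by setting $b_n(x,s) = T_n(b(x,s))$ where $T_n$ is the truncation at level $n$, so that $b_n$ becomes bounded while retaining the sign condition and monotonicity in $s$. For fixed $n$, I would set up the approximate problem on the space $V_0^1 L_M(\Omega)$, or rather on a suitable Galerkin subspace: take an increasing sequence of finite-dimensional subspaces $\{V_k\}$ whose union is dense (in the modular topology) in $V_0^1 L_M(\Omega) \cap L^\infty(\Omega)$, which is where the density hypothesis of the theorem enters. On each $V_k$ the operator $u \mapsto -\DIV(\cA(x,\nabla u) + \Phi(u)) + b_n(x,u) - \DIV F$ is, by (A1)--(A3), (P) and the monotonicity of $b_n$, a continuous monotone (indeed pseudomonotone) coercive operator — coercivity coming from the lower bound in (A2), $\cA(x,\xi)\cdot\xi \geq M(x,c_1^\cA \xi) - h_1$, together with the sign condition $b_n(x,s)\sign(s)\geq 0$ and boundedness of $\Phi$ and $F \in E_{M^*}$ via Young's inequality in the Musielak--Orlicz pairing. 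The Brouwer fixed point / Galerkin lemma then yields a solution $u_{n,k}$, and standard a priori bounds give, uniformly in $k$: a modular bound on $\nabla u_{n,k}$ through $M$, hence a bound on $\cA(x,\nabla u_{n,k})$ through $M^*$ by the second inequality in (A2), and an $L^1$ bound on $b_n(x,u_{n,k}) u_{n,k}$.

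The key passage to the limit $k\to\infty$ for fixed $n$ requires extracting weak-$*$ limits: $\nabla u_{n,k} \wstar \nabla u_n$ in $L_M$, $\cA(x,\nabla u_{n,k}) \wstar \overline{\cA}_n$ in $L_{M^*}$, and $u_{n,k} \to u_n$ strongly in $L^1$ (via compact embedding of $V_0^1 L_M \hookrightarrow W_0^{1,1} \hookrightarrow\hookrightarrow L^1$), so $\Phi(u_{n,k}) \to \Phi(u_n)$ a.e. and boundedly. The identification $\overline{\cA}_n = \cA(x,\nabla u_n)$ is the heart of the matter and should be handled by the Minty--Browder monotonicity trick adapted to the non-reflexive modular setting: one tests with $u_{n,k} - w$ for smooth $w$, uses (A3) to get $\int (\cA(x,\nabla u_{n,k}) - \cA(x,\nabla w))\cdot(\nabla u_{n,k} - \nabla w)\geq 0$, passes to the limit using the weak-$*$ convergences plus the energy identity obtained by testing the approximate equation with $u_{n,k}$ itself, and then lets $w \to u_n$ in the modular topology — this last step is precisely where the density hypothesis is indispensable, since $\nabla u_n$ itself need not be modularly approximable by smooth functions unless the space is well-behaved. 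A parallel but easier limit $n\to\infty$ then removes the truncation on $b$: the a priori bounds are uniform in $n$, $b_n(x,u_n)\to b(x,u)$ in $L^1$ follows from the sign condition and a Vitali/Fatou argument controlling $\int b_n(x,u_n)u_n$, and one concludes $u \in V_0^1 L_M(\Omega)$ solves the weak formulation for all $v \in V_0^1 L_M(\Omega)\cap L^\infty(\Omega)$.

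I expect the main obstacle to be the identification step $\overline{\cA} = \cA(x,\nabla u)$ in the absence of reflexivity and of a $\Delta_2$/$\nabla_2$ condition: the usual reflexive-space arguments (Browder--Minty) do not apply verbatim, and one cannot freely use $\nabla u$ as a test function since it may not lie in the right space against which $\cA(x,\nabla u)\in L_{M^*}$ pairs. The resolution is the standard one in the Musielak--Orlicz literature (cf.\ the framework of \cite{book} and the density criterion of \cite{Bor-Chl}): approximate $\nabla u$ by smooth gradients in the modular topology, use the modular continuity of $M$ to control the error terms, and combine with Fatou's lemma on the lower-semicontinuous modular functional; the monotonicity (A3) together with the almost-everywhere convergence of a subsequence of gradients (extracted via the monotonicity trick) then pins down the limit. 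For the uniqueness claim, given two weak solutions $u_1, u_2$, I would test the difference of the two equations with the truncation $T_k(u_1 - u_2) \in V_0^1 L_M(\Omega)\cap L^\infty(\Omega)$: the leading term gives $\int_{\{|u_1-u_2|<k\}} (\cA(x,\nabla u_1)-\cA(x,\nabla u_2))\cdot\nabla(u_1-u_2)\,dx \geq 0$ by (A3), the term $\int (\Phi(u_1)-\Phi(u_2))\cdot \nabla T_k(u_1-u_2)\,dx$ is controlled by the Lipschitz bound on $\Phi$ and absorbed (or shown to vanish as $k\to\infty$ after a careful estimate using that $\nabla(u_1-u_2)$ is integrable on the truncation set), and the strict monotonicity of $s\mapsto b(x,s)$ forces $\int (b(x,u_1)-b(x,u_2))T_k(u_1-u_2)\,dx > 0$ unless $u_1 = u_2$ a.e.; letting $k\to\infty$ and using $u_1, u_2 \in W_0^{1,1}$ yields $u_1 = u_2$.
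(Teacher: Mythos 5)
Your existence plan is workable but adds an unnecessary layer compared to the paper. You propose to first truncate $b$ to $b_n := T_n(b)$, run a Galerkin scheme for each fixed $n$, then pass $k\to\infty$ and afterwards $n\to\infty$. The paper shows this double limit is avoidable: one runs a \emph{single} Galerkin scheme with the untruncated $b$, and the uniform integrability of $\{b(\cdot,u_n)\}$ in $L^1$ comes for free from the sign condition together with the a priori bound $\int_\Omega b(x,u_n)u_n\,dx\leq C$ (split $\Omega$ into $\{|u_n|<\ell\}$ and $\{|u_n|\geq\ell\}$; on the former $|b(x,u_n)|\leq|b(x,\ell)|$, on the latter $|b(x,u_n)|\leq\tfrac{1}{\ell}|b(x,u_n)u_n|$). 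Truncating $b$ buys you nothing here and would force you to redo the identification $\overline{\cA}=\cA(x,\nabla u)$ once more in the outer limit. Your use of the modular-density assumption in the monotonicity trick is in the right spirit; the paper's version is a bit more explicit in that it tests the limit equation with $T_k(u)$ (since $u$ itself may be unbounded and hence not an admissible test function) and uses Fatou to handle $\int b(x,u)u$.

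There is a genuine gap in your uniqueness argument. Testing with the truncation $T_k(u^1-u^2)$ and sending $k\to\infty$ does not control the convective term. On the set $\{|u^1-u^2|<k\}$ the Lipschitz bound only gives $|\Phi(u^1)-\Phi(u^2)|\leq L_\Phi\,|u^1-u^2|\leq L_\Phi\,k$, which blows up, and the domain of integration \emph{grows} with $k$, so
\[
\int_{\{|u^1-u^2|<k\}}\big(\Phi(u^1)-\Phi(u^2)\big)\cdot\nabla(u^1-u^2)\,dx
\]
has no reason to vanish or even to be controlled. What you need is the opposite scaling: test with the approximation $H_\delta(u^1-u^2)$ of the Heaviside function (equivalently, $\tfrac{1}{\delta}T_\delta^+(u^1-u^2)$) and send $\delta\to 0$. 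Then the $\Phi$-term is
\[
\frac{1}{\delta}\int_{\{0\leq u^1-u^2\leq\delta\}}\big(\Phi(u^1)-\Phi(u^2)\big)\cdot\nabla(u^1-u^2)\,dx,
\]
and on the set $\{0\leq u^1-u^2\leq\delta\}$ Lipschitz continuity gives $|\Phi(u^1)-\Phi(u^2)|\leq L_\Phi\,\delta$, cancelling the $1/\delta$ and leaving $L_\Phi\int_{\{0\leq u^1-u^2\leq\delta\}}|\nabla(u^1-u^2)|\,dx\to 0$ since $\nabla(u^1-u^2)\in L^1(\Omega)$ and the domain shrinks. The leading term remains nonnegative by (A3), and the $b$-term converges to $\int_\Omega(b(x,u^1)-b(x,u^2))H(u^1-u^2)\,dx\leq 0$, which combined with the sign condition and strict monotonicity yields $u^1\leq u^2$; swap roles to conclude. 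Without switching to the small-parameter truncation, your uniqueness proof does not close.
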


\begin{rem}
Our result is valid in the case of bounded data. In fact, for each $g \in L^\infty(\Omega)$, we know that there exists $F:\Omega\to \R^d$, such that $g=\DIV F$ and $F\in E_{M^\ast}(\Omega)$. The fact one can take $F\in E_{M^\ast}(\Omega)$ is a consequence of properties of Bogovski operator. This is explained in \cite[Remark~4.1.7]{book} with the use of \cite[Lemma~II.2.1.1]{Sohr}.
\end{rem} 

In order to ensure the density one may assume regularity of $M$. Note that the smooth functions are dense in $ V^1_0L_M(\Omega)$ in the modular topology if the following balance condition holds, see \cite{Bor-Chl}.

\medskip

{\bf Condition $\mathsf{(B)}$. } Given an $N$-function $M:\Omega\times \R^d\to\rp$ suppose there exists a~constant $C_M>1$ such that 
for every ball $B\subset\Omega$ with $|B|\leq 1$, every $x\in B,$ and for all $\xi\in\R^d$ such that $|\xi|>1$ and $ M(x, C_M\xi)\in[1,\tfrac{1}{|B|}]$ 
there holds $\sup_{y\in B}M(y,\xi)\leq M(x,C_M\xi)$.

\medskip

Note that in the isotropic and doubling regime this condition is known to be sufficient to the boundedness of the maximal operator. Moreover, when $d=1$ they are equivalent~\cite{Har-Has}. Condition~$\mathsf{(B)}$ is essentially less restrictive than the isotropic one from~\cite{YAGS} or the anisotropic ones used in~\cite{GSZ-18,book}. Following~\cite{Bor-Chl,Har-Has}, we give examples of $N$-functions satisfying the above balance condition.
\begin{Ex}The following $N$-functions satisfy the balance condition $\mathsf{(B)}$.
\begin{enumerate}
\item Variable exponent case: $M(x,\xi)=|\xi|^{p(x)}$,
where $p(x):\Omega \rightarrow [p^{-},p^{+}]$ is log-H\"{o}lder continuous and $1<p^{-}\leq p(\cdot)\leq p^{+}\leq \infty$; see the proof of \cite[Proposition~7.1.2]{Har-Has}.
\item Double phase case: $M(x,\xi)=|\xi|^{p}+a(x)|\xi|^{q}$, with $1< p\leq q < \infty$,  $ 0\leq a \in C^{0,\alpha}(\Omega)$,~$\alpha \in (0,1]$ and {~$\frac{p}{q}\leq 1+\frac{\alpha}{n}$}; see the proof of \cite[Proposition~7.2.2]{Har-Has}.
\item Anisotropic variable case:  $M(x,\xi)=\sum_{i=1}^{n}|\xi_i|^{p_{i}(x)}$,
where $p_{i}(x):\Omega \rightarrow [p_{i}^{-},p_{i}^{+}]$ are log-H\"{o}lder continuous and $1<p^{-}_i\leq p_i(\cdot)\leq p_i^{+}\leq \infty$; see \cite[Subsection~4.4]{Bor-Chl}. 
\item   Anisotropic double phase case: $M(x,\xi)=\sum_{i=1}^{n}\left(|\xi_i|^{p_i}+a_i(x)|\xi_i|^{  q_i}\right)$, where $1<p_i\leq q_i<\infty$, $0\leq a_i \in C^{0,\alpha_{i}}(\Omega)$, $\alpha_i\in (0,1]$, and $\frac{p_i}{q_i}\leq 1+\frac{\alpha_{i}}{n}$; for this example, as well as anisotropic multi-phase case (also with Orlicz phases), see \cite[Subsection~4.4]{Bor-Chl}.
\end{enumerate}
Taking into account~\cite{CN} and \cite[Section~4]{Bor-Chl} one can provide an explicit condition that implies $\mathsf{(B)}$ even in the case when the anisotropic function $M(x,\xi)$ does \underline{not} admit a so-called orthotropic decomposition $\sum_{i=1}^d M_i(x,\xi_i)$ even after an affine change of variables.
\end{Ex}

Let us comment on the choice of method of the proof and its importance. We point out that there are some related results in the literature. For instance~\cite{GMW} is devoted to the existence of the weak solutions to a problem of the form~\eqref{eq:main} with $\Phi\equiv 0$ and $b\equiv 0$. Since the equation is posed in the inhomogeneous and fully anisotropic space setting, where no conditions of doubling-type are assumed, the underlying space needs to be equipped with some approximation property. For this a compatibility condition might be imposed. The main idea of the proof of existence in~\cite{GMW} is to introduce a regularised problem with solutions in the classical Orlicz--Sobolev space, make use of the theory of pseudo-monotone operators due to~\cite{Gos79,GoMu,MT}, and pass to the limit. This scheme was later applied also e.g. in~\cite{ACCZG,book,GSZ-18} in the proofs of existence to problems without lower-order terms. A direct proof was a puzzling lack in this theory even in the absence of the lower-order terms.

Note that the framework of~\cite{Gos79,MT} is isotropic and describes the admitted behaviour of lower-order terms by the means of functions governing the operator. It is hard to provide natural counterpart of assumptions on the lower-order terms to get existence for possibly broad class of problems via this approach. Let us point out that no `good' embeddings of fully anisotropic Musielak--Orlicz--Sobolev spaces into  Musielak--Orlicz spaces are known. In turn, so far one is not equipped with relevant growth conditions that can be applied to the lower-order terms due to the embeddings.

Interested in the existence of solutions to as broad as possible class of problems and in a straightforward reasoning that will be relevant for the general properties of the function setting and problems with lower-order terms, we provide a direct proof of existence of weak solutions to \eqref{eq:main}. Our method is based on the Galerkin approximation. To be precise, we take a sequence of solutions to finite dimensional problems and show that it has a subsequence converging sufficiently well to our weak solution. Despite the idea is classical, the proof is not trivial because of two reasons. The first of them results from the fact that the understanding of the dual pairing is complicated ($L_{M^*}$ is not dual to $L_{M}$, but only associate).   The second reason comes from the fact that in the general Orlicz--Sobolev  spaces the set of smooth functions is not dense in the norm topology. Assuming the balanced behaviour of $M$ one is equipped with the best possible density in the framework of Musielak--Orlicz--Sobolev spaces, namely with the modular density of smooth functions, cf.~\cite{Bor-Chl}. 

Despite we follow the schemes of the proofs that can be found in~\cite{book,GWWZ-2012,Sch82}, the compactness arguments are much more delicate. We take into account  the anisotropy and inhomogeneity not considered in~\cite{Sch82}, introduce general lower-order terms unlike~\cite{book}, and relax the doubling growth condition of~\cite{GWWZ-2012}, where assuming $M^*\in\Delta_2$ is essential. In turn, most of the limit passages needed to be justified in a different way than in the above-mentioned literature. Since no theory of operator classes or regularization of the equation is used in our proof, we do not apply directly ideas of \cite{DA11,Gos74,Gos79,Gos82,MT}. This enables us to dismiss imposing growth conditions on the lower-order terms. Summing up, we provide a detailed direct proof of the existence result supplied with uniqueness shown by choosing a relevant sequence of test functions.  

\section{Preliminaries}\label{sec:preliminaries}
By $\Omega$ we always mean a bounded domain of $\R^d$ with Lipschitz regular boundary. If not specified, a constant $C$ is a positive constant, possibly changing line by line. For a measurable set $A$, we denote by  $\chi_A$ its characteristic function. 
By $C^\infty_c(\Omega)$ we mean the set of compactly supported smooth functions over $\Omega$.

Our framework is anisotropic Musielak--Orlicz spaces based on $N$-functions.  Readers are advised to turn to \cite{book} for a thorough resource. In order to introduce the main definitions let us define a~Young function in the classical way. We say a function $m:[0,\infty)\to [0,\infty)$ is a Young function if it is convex, satisfies $m(s) = 0 \Leftrightarrow s=0$ and is superlinear at zero and infinity, i.e.
\begin{equation*}
    \lim_{s\to 0^+} \frac{m(s)}{s} = 0 \quad \text{and} \quad \lim_{s\to \infty} \frac{m(s)}{s}=\infty.
\end{equation*}
A function $M(x, \xi):\Omega \times \mathbb{R}^{d}\rightarrow \mathbb{R}$  is called an $N$-function if
\begin{itemize}
\item it is a convex function with respect to ${\xi}$;
\item it is a Carath\'{e}odory function satisfying $M(x, 0)=0$; 
\item $M(x, \xi)=M(x, -\xi)$ for a.e. $x\in\Omega$;
\item there exist two Young functions $m_1, m_2: [0,\infty) \to [0,\infty)$ such that for a.e.  $x\in\Omega$ it holds
\begin{eqnarray*}
m_1(|\xi|) \leq M(x,\xi) \leq m_2(|\xi|).
\end{eqnarray*}
\end{itemize}
The conjugate function $M^*$ to  an $N$-function $M$ is defined by
\begin{equation*}
   M^\ast(x, \eta):=\displaystyle\sup_{ \xi \in \mathbb{R}^d}  \left[\xi \cdot \eta -M(x, \xi)  \right] \quad \mbox{for any}~~\eta \in \mathbb{R}^d~~ \mbox{and} ~~ \mbox{a.e.} ~~x\in \Omega. 
\end{equation*}
In that case, the Fenchel--Young inequality reads
\begin{eqnarray*}
\xi \cdot \eta \leq M(x, \xi)+M^{\ast}(x, \eta) \quad \mbox{for all}~~~\xi, \eta \in \mathbb{R}^{d} ~~~ \mbox{and}~~ \mbox{a.e.}~~x\in\Omega.
\end{eqnarray*}

For an $N$-function we define the general Musielak--Orlicz class $\mathcal{L}_{M}(\Omega)$ as the set of all measurable functions $\xi:\Omega\rightarrow \mathbb{R}^{d}$ satisfying
$$
 \int_{\Omega}M(x,\xi(x))\, dx<\infty\,.
 $$
The Musielak--Orlicz space $L_{M}(\Omega)$ is the smallest linear hull of $\mathcal{L}_{M}(\Omega)$ equipped with the Luxemburg norm
$$
\|\xi\|_{L_{M}(\Omega)}:=\inf \left\{\lambda > 0: \int_{\Omega}M \left(x,\frac{\xi(x)}{\lambda}\right)dx\leq 1 \right\}.
$$
The space $E_{M}(\Omega)$ is the closure in $L_{M}$-norm of the set of bounded functions.
Equivalently, $L_M(\Omega)$ and $E_M(\Omega)$ are defined as sets of functions $\xi:\Omega \to \R^d$ satisfying
\begin{equation*}
    \int_{\Omega}M(x,\lambda \xi(x))\, dx<\infty
\end{equation*}
for some $\lambda \in \R$ and for every $\lambda \in \R$, respectively \cite[Lemma 3.1.8]{book}. We also note that $(E_M(\Omega))^\ast = L_{M^\ast}(\Omega)$ and $(E_{M^\ast}(\Omega))^\ast = L_M(\Omega)$ \cite[Theorem 3.5.3]{book} but no other duality relations are expected.

Next we define various notions of convergence. The first is called modular convergence and it generates a more suitable topology to our possibly non-reflexive spaces than the norm topology.
A sequence $\left\{\xi_{i}\right\}^{\infty}_{i=1}$ converges modularly to ${\xi}$ in $L_{M}(\Omega)$, which we denote as ${\xi_{i}}\xrightarrow[]{M}\xi$, if
\begin{equation*}
    \int_{\Omega}M\left(x,\frac{\xi_{i}-\xi}{\lambda}\right)dx \xrightarrow{i\rightarrow \infty} 0 
\end{equation*}
for some $\lambda>0$.

Let us point out that smooth functions are not dense in our Sobolev space in the norm topology, but they are dense in the modular topology.

\begin{lem}[Theorem 1 in \cite{Bor-Chl}]\label{lem:approximation}
Assume that  $\Omega$ is  a Lipschitz domain and $M$ is an $N$-function satisfying the Balance condition ($B$). Then for any  $\phi \in V^{1}_{0}L_{M}(\Omega)$, there exists a sequence $\left\{\phi_{\delta}\right\}_{\delta>0}\in C^{\infty}_{c}(\Omega)$ satisfying  $\phi_{\delta}\rightarrow  {\phi}$ in $L^{1}(\Omega)$ and  $ \nabla\phi_{\delta} \xrightarrow{M} \nabla\phi$. Additionally, if $\phi$ is bounded, then $\|\phi_\delta\|_{L^\infty(\Omega)}\leq C(\Omega) \|\phi\|_{L^\infty(\Omega)}$ for every $\delta>0$.

\end{lem}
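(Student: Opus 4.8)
The plan is to prove the approximation lemma by the classical three-step scheme of localization, mollification, and a modular estimate, with Condition~$\mathsf{(B)}$ entering only in the last step. First I would reduce to the case $\supp\phi\Subset\Omega$. Extend $\phi\in V^1_0L_M(\Omega)\subset W^{1,1}_0(\Omega)$ by zero, so that the extension lies in $W^{1,1}(\R^d)$ with $\nabla\phi\in L_M(\R^d)$, cover $\overline\Omega$ by finitely many open sets $U_0\Subset\Omega,U_1,\dots,U_N$ in each of which, after a rigid motion, $\Omega\cap U_j$ is the subgraph of a Lipschitz function with $e_d$ pointing inward, and pick a subordinate partition of unity $\theta_0,\dots,\theta_N$ with $\sum_j\theta_j\equiv1$ near $\overline\Omega$. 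Then it suffices to approximate each $\theta_j\phi$: for $j\geq1$ one first translates, $(\theta_j\phi)(\cdot+t e_d)$ with small $t>0$, so that the support is pushed strictly into $\Omega$. Summing the $N+1$ pieces produces the constant $C(\Omega)$ in the $L^\infty$-bound, since neither translation nor mollification increases the sup-norm; convergence of the translation to $\theta_j\phi$ in $L^1(\Omega)$ is standard, while its modular convergence $\nabla[(\theta_j\phi)(\cdot+t e_d)]\xrightarrow{M}\nabla(\theta_j\phi)$ reduces to a modular estimate of the same type, discussed below.

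For $\phi$ with $\supp\phi\Subset\Omega$, set $\phi_\delta:=\phi*\rho_\delta$ with a standard mollifier. Then $\phi_\delta\in C^\infty_c(\Omega)$ for small $\delta$, $\phi_\delta\to\phi$ and $\nabla\phi_\delta=(\nabla\phi)*\rho_\delta\to\nabla\phi$ in $L^1(\Omega)$, and $\|\phi_\delta\|_{L^\infty}\leq\|\phi\|_{L^\infty}$; thus everything except the modular convergence of the gradients is immediate. In the non-doubling regime the construction may have to be refined---for instance by mollifying at a space-dependent scale $\sigma(x)\leq\delta$, or by a dyadic decomposition of $\Omega$ according to the size of $M(x,\nabla\phi(x))$---but the mechanism below is unchanged.

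The heart of the matter is to exhibit $\lambda>0$ with $\int_\Omega M\left(x,\tfrac{\nabla\phi_\delta-\nabla\phi}{\lambda}\right)dx\to0$. By convexity of $M(x,\cdot)$ this is dominated by $\int_\Omega M\left(x,\tfrac{2\nabla\phi}{\lambda}\right)dx$, a fixed $L^1$ function and hence equi-integrable, plus $\int_\Omega M\left(x,\tfrac{2\nabla\phi_\delta}{\lambda}\right)dx$; so I must show that $\{\,x\mapsto M(x,\tfrac{2\nabla\phi_\delta(x)}{\lambda})\,\}_{\delta}$ is equi-integrable. Jensen's inequality for the convex map $M(x,\cdot)$ gives pointwise
\[
M\left(x,\tfrac{2(\nabla\phi*\rho_\delta)(x)}{\lambda}\right)\leq\int\rho_\delta(z)\,M\left(x,\tfrac{2\nabla\phi(x-z)}{\lambda}\right)dz ,
\]
and Condition~$\mathsf{(B)}$ is invoked on the ball $B(x-z,2\delta)\ni x$ to replace the frozen point $x$ by $x-z$: whenever $M\left(x-z,\tfrac{2C_M\nabla\phi(x-z)}{\lambda}\right)$ lies in the admissible window $[1,|B(x-z,2\delta)|^{-1}]$, one gets $M\left(x,\tfrac{2\nabla\phi(x-z)}{\lambda}\right)\leq M\left(x-z,\tfrac{2C_M\nabla\phi(x-z)}{\lambda}\right)$, so that the right-hand side is bounded by $(G*\rho_\delta)(x)$ with the fixed function $G:=M\left(\cdot,\tfrac{2C_M\nabla\phi}{\lambda}\right)$; here $\lambda$ must be chosen large enough that $\int_\Omega M(x,\tfrac{2C_M\nabla\phi}{\lambda})\,dx<\infty$, which is possible because $\nabla\phi\in L_M(\Omega)$. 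The remaining contributions split into the region where $|\nabla\phi(x-z)|\lesssim\lambda$, on which $M(x,\cdot)\leq m_2(\mathrm{const})$ gives a bounded and thus equi-integrable term, and the ``large-gradient'' set where $M(x-z,\tfrac{2C_M\nabla\phi(x-z)}{\lambda})>|B(x-z,2\delta)|^{-1}$, which has measure $O(\delta^d)$ by Chebyshev's inequality; there Condition~$\mathsf{(B)}$ is of no help and the integrand is controlled only through the envelopes $m_1\leq M(x,\cdot)\leq m_2$, the term being absorbed by balancing the decay of this measure against the growth of $m_2$ at the relevant scale and using the absolute continuity of $t\mapsto\int_{\{\,\cdot\,\}}M(x,t\nabla\phi)\,dx$. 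Granting the equi-integrability of $\{M(x,\tfrac{2\nabla\phi_\delta}{\lambda})\}_\delta$, and noting that $\nabla\phi_\delta\to\nabla\phi$ in $L^1$ forces $M(x,\tfrac{\nabla\phi_\delta-\nabla\phi}{\lambda})\to0$ a.e.\ along a subsequence, the Vitali convergence theorem yields $\int_\Omega M(x,\tfrac{\nabla\phi_\delta-\nabla\phi}{\lambda})\,dx\to0$, and a routine subsequence argument upgrades this to convergence as $\delta\to0$.

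I expect the main obstacle to be precisely this ``large-gradient'' set: where $M(x,\nabla\phi(x))$ is comparable to the reciprocal of the ball volume, Condition~$\mathsf{(B)}$ no longer relates $M$ at neighbouring points, so one must exploit that the set is asymptotically negligible while estimating $M$ there through the growth envelopes alone---a quantitative trade-off that the balance encoded in~$\mathsf{(B)}$ is tailored to make work, and which in general does not close with a single fixed mollification scale. It is for this delicate estimate, together with the accompanying (possibly space-dependent) choice of scale, that we refer to~\cite{Bor-Chl}.
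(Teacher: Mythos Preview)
The paper does not contain a proof of this lemma at all: it is stated as a direct citation of Theorem~1 in \cite{Bor-Chl}, with no argument given. So there is no ``paper's own proof'' to compare your proposal against; the authors simply import the result as a black box and use it via the modular density assumption in the main theorem.

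Your sketch is a reasonable outline of how the argument in \cite{Bor-Chl} is structured---localization via a partition of unity and inward translation near the boundary, mollification, and then a Jensen-plus-Condition~$\mathsf{(B)}$ estimate to transfer the $x$-dependence of $M$ across the mollification---and you correctly identify the genuinely delicate point, namely the ``large-gradient'' set where $M(x-z,\cdot)$ exceeds the threshold $|B|^{-1}$ and Condition~$\mathsf{(B)}$ gives no information. You also honestly concede that this step does not close by elementary means and refer back to \cite{Bor-Chl} for it. In that sense your proposal is not a self-contained proof either, but an annotated pointer to the same source the paper cites; as such it neither conflicts with nor adds to what the paper does.
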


Again, the possible non-reflexivity forces us to consider replacements for weak convergence. Let $X$ and $Y$ be subsets of $L^1(\Omega)$ not necessarily related by duality. We say $f_n \to f$ for $\sigma(X,Y)$ if
\begin{equation*}
    \int_\Omega f_n g \, dx \xrightarrow{n\to \infty} \int_\Omega fg \, dx
\end{equation*}
for all $g \in Y$. If $X=L_M(\Omega)$ and $Y=E_{M^\ast}(\Omega)$, we recover the weak-$\ast$ convergence and can also denote $f_n \stackrel{\ast}\rightharpoonup f$.

Another consequence of modular topology is the following fact.
\begin{lem}[Lemma 3.4.6 in \cite{book}]\label{lem:modular-weak-conv}
Let $M$ be an $N$-function and $\{\xi_n\}^{\infty}_{n=1}$, $\xi \in L_M(\Omega)$. If $ \xi_n \xrightarrow{M} \xi$ in $L_M(\Omega)$ then, up to a subsequence, $\xi_n \xrightarrow{n\rightarrow \infty} \xi$ in $\sigma(L_M,L_{M^\ast})$.
\end{lem}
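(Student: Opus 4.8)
The statement to prove is Theorem~\ref{theo:main}, the existence (and conditional uniqueness) of weak solutions to~\eqref{eq:main}. The plan is to run a Galerkin approximation, extract limits using the modular density of smooth functions (Lemma~\ref{lem:approximation}) and the weak-type compactness furnished by Lemma~\ref{lem:modular-weak-conv}, and identify the limit of the nonlinear leading term $\cA(x,\nabla u_n)$ by a Minty-type monotonicity trick adapted to the non-reflexive, merely associate dual pairing.

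First I would fix a countable linearly dense family $\{w_i\}_{i=1}^\infty\subset C_c^\infty(\Omega)$ in $V_0^1L_M(\Omega)$ (in the modular sense, available by Lemma~\ref{lem:approximation}) and, for each $n$, seek $u_n=\sum_{i=1}^n c_i^n w_i$ solving the finite-dimensional system
\begin{equation*}
\int_\Omega \cA(x,\nabla u_n)\cdot\nabla w_j + \Phi(u_n)\cdot\nabla w_j + b(x,u_n)w_j\,dx = \int_\Omega F\cdot\nabla w_j\,dx,\qquad j=1,\dots,n.
\end{equation*}
Existence of $u_n$ follows from Brouwer's fixed point theorem (via the standard corollary on zeros of continuous vector fields with a coercive-type sign condition on a large sphere), using (A1), (P), (b) for continuity and (A2) together with the sign condition $b(\cdot,s)\sign(s)\ge 0$ for the a priori bound. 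Testing with $u_n$ itself and invoking the first inequality in (A2), the Fenchel--Young inequality to absorb $\int F\cdot\nabla u_n$, boundedness of $\Phi$, and the sign condition on $b$, yields a uniform bound on $\int_\Omega M(x,c_1^\cA\nabla u_n)\,dx$, hence on $\|\nabla u_n\|_{L_M}$; Poincaré-type control gives a bound on $\|u_n\|_{W_0^{1,1}}$. The second inequality in (A2) then bounds $\int_\Omega M^\ast(x,c_3^\cA\cA(x,\nabla u_n))\,dx$, so $\{\cA(x,\nabla u_n)\}$ is bounded in $L_{M^\ast}$.

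Next, using $(E_{M^\ast})^\ast=L_M$ and $(E_M)^\ast=L_{M^\ast}$, I would extract a subsequence with $u_n\rightharpoonup u$ weakly in $W_0^{1,1}$, $\nabla u_n \to \nabla u$ for $\sigma(L_M,E_{M^\ast})$, and $\cA(x,\nabla u_n)\wstar \overline{\cA}$ for $\sigma(L_{M^\ast},E_M)$ for some $\overline{\cA}\in L_{M^\ast}$; by Rellich $u_n\to u$ strongly in $L^1$ and a.e., so $\Phi(u_n)\to\Phi(u)$ a.e. and, being bounded, $\Phi(u_n)\wstar\Phi(u)$ in $L^\infty$. The term $b(x,u_n)$ requires care since no growth on $b$ is assumed: here I would use the sign condition to get, via Fatou/Chacon's biting lemma or the classical argument of testing against $T_k(u_n-\phi)$-type functions, that $b(x,u_n)\to b(x,u)$ in $L^1$ (or at least that $\int b(x,u_n)v\to\int b(x,u)v$ for bounded $v$, which suffices). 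Passing to the limit in the Galerkin equations gives, for each fixed $w_j$ and then by modular density for all $v\in V_0^1L_M\cap L^\infty$,
\begin{equation*}
\int_\Omega \overline{\cA}\cdot\nabla v + \Phi(u)\cdot\nabla v + b(x,u)v\,dx = \int_\Omega F\cdot\nabla v\,dx.
\end{equation*}
It remains to show $\overline{\cA}=\cA(x,\nabla u)$, which is the main obstacle. The standard Minty argument must be executed carefully: one has $\limsup_n\int_\Omega \cA(x,\nabla u_n)\cdot\nabla u_n\,dx \le \int_\Omega F\cdot\nabla u - \Phi(u)\cdot\nabla u - b(x,u)u\,dx$ (using the equation with $v=u$ after justifying $u$ is an admissible test function, possibly via truncation $T_k u$ and monotone limits), which by the limit equation equals $\int_\Omega\overline{\cA}\cdot\nabla u\,dx$; then monotonicity (A3), $\int_\Omega(\cA(x,\nabla u_n)-\cA(x,\eta))\cdot(\nabla u_n-\eta)\ge 0$ for all smooth $\eta$ approximating arbitrary directions, passing $n\to\infty$ and using modular density to let $\eta\to\nabla u$, gives $\int_\Omega(\overline{\cA}-\cA(x,\eta))\cdot(\nabla u-\eta)\ge 0$, whence $\overline\cA=\cA(x,\nabla u)$ by the usual hemicontinuity/Carathéodory argument. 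The delicate points, compared to the reflexive or doubling case, are that test functions must lie in $E_{M^\ast}$ or $L_M\cap L^\infty$ for the pairings to make sense, so all limit passages go through modular approximation rather than norm density, and the truncations $T_k$ are needed to make $u$ and $u_n$ legitimate test functions.

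Finally, for uniqueness under strict monotonicity of $b$ and Lipschitz $\Phi$: given two solutions $u_1,u_2$, I would test the difference of the two weak formulations with $T_k(u_1-u_2)$, use (A3) to discard the leading term's contribution as nonnegative, use the Lipschitz bound on $\Phi$ and a Young-type absorption to control the $\Phi$-term, and use strict monotonicity of $b$ to force $\int_\Omega (b(x,u_1)-b(x,u_2))T_k(u_1-u_2)\,dx=0$ in the limit $k\to\infty$, which gives $u_1=u_2$ a.e. The Lipschitz term is handled by noting $|\Phi(u_1)-\Phi(u_2)|\le L|u_1-u_2|$ and that on $\{|u_1-u_2|<k\}$ the product $(\Phi(u_1)-\Phi(u_2))\cdot\nabla T_k(u_1-u_2)$ is controlled, letting $k\to 0$ if necessary after rescaling, or more directly using that the strict monotonicity gives the needed sign.
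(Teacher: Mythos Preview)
Your proposal does not address the stated lemma at all. The statement you were asked to prove is Lemma~\ref{lem:modular-weak-conv}: that modular convergence $\xi_n\xrightarrow{M}\xi$ in $L_M(\Omega)$ implies, up to a subsequence, convergence in $\sigma(L_M,L_{M^\ast})$. Instead you have written a sketch of the proof of Theorem~\ref{theo:main}, the main existence and uniqueness result for problem~\eqref{eq:main}. These are entirely different statements; Lemma~\ref{lem:modular-weak-conv} is a functional-analytic tool (quoted from \cite{book} without proof in the paper) that is \emph{used} in the proof of Theorem~\ref{theo:main}, not the other way around.

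A proof of Lemma~\ref{lem:modular-weak-conv} itself would proceed along these lines: from $\int_\Omega M(x,(\xi_n-\xi)/\lambda)\,dx\to 0$ one extracts (passing to a subsequence) a.e.\ convergence $\xi_n\to\xi$ and an integrable majorant for $M(x,(\xi_n-\xi)/\lambda)$; then for any $\eta\in L_{M^\ast}(\Omega)$ one uses the Fenchel--Young inequality to dominate $|(\xi_n-\xi)\cdot\eta|$ by $M(x,(\xi_n-\xi)/\lambda)+M^\ast(x,\lambda\eta)$ (modulo constants), and applies Vitali's theorem or dominated convergence to get $\int_\Omega(\xi_n-\xi)\cdot\eta\,dx\to 0$. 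None of this appears in your proposal.
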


Next, we shall give  some definitions and preliminary lemmas related to $N$-functions and Musielak--Orlicz spaces. The first one compares a modular and a norm of a function.

\begin{lem}[Lemma~3.1.14 in \cite{book}]\label{lem:LM} 
Let $M$ be an $N$-function. 
\begin{enumerate}
    \item If $\xi \in L_M(\Omega)$ and $\|\xi\|_{L_M(\Omega)}\leq 1$, then
 $       \int_{\Omega}M(x,\xi(x))\,dx \leq \|\xi\|_{L_{M}(\Omega)}. 
  $
\item If $\xi \in L_M(\Omega)$ and $\|\xi\|_{L_M(\Omega)}>1$, then 
 $    \int_{\Omega}M(x,\xi(x))\,dx \geq \|\xi\|_{L_{M}(\Omega)}.$
\end{enumerate}
\end{lem}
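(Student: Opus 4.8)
The plan is to argue directly from the modular $\rho_M(\xi):=\int_\Omega M(x,\xi(x))\,dx$ and the definition of the Luxemburg norm, using nothing beyond convexity of $M(x,\cdot)$ together with $M(x,0)=0$. The single elementary fact driving the whole proof is the scaling inequality $M(x,t\xi)\le t\,M(x,\xi)$ for a.e. $x$ and every $t\in(0,1]$, obtained by writing $t\xi=t\xi+(1-t)0$ and using convexity; in the equivalent form $M(x,\xi)\le t\,M(x,\xi/t)$ it will also be used. One may assume $\xi\neq 0$ (otherwise both assertions are trivial) and set $\lambda_0:=\|\xi\|_{L_M(\Omega)}$, which is finite since $\xi\in L_M(\Omega)$.

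First I would pin down the set $S:=\{\lambda>0:\rho_M(\xi/\lambda)\le 1\}$. Applying the scaling inequality with $t=\lambda/\lambda'$ for $\lambda<\lambda'$ shows $M(x,\xi/\lambda')\le M(x,\xi/\lambda)$ a.e., so $\lambda\mapsto\rho_M(\xi/\lambda)$ is non-increasing and $S\supseteq(\lambda_0,\infty)$. Letting $\lambda\downarrow\lambda_0$, the integrands $M(x,\xi/\lambda)$ increase pointwise to $M(x,\xi/\lambda_0)$ (using continuity of the finite convex function $M(x,\cdot)$), so by monotone convergence $\rho_M(\xi/\lambda_0)=\lim_{\lambda\downarrow\lambda_0}\rho_M(\xi/\lambda)\le 1$; hence the infimum defining the norm is attained, $\lambda_0\in S$. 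I expect this attainment — equivalently, right-continuity of $\lambda\mapsto\rho_M(\xi/\lambda)$ at $\lambda_0$ — to be the only step that is not completely routine, and it is precisely what takes care of the borderline case $\lambda_0=1$ in part (1).

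For part (1), where $\lambda_0\le 1$: the scaling inequality with $t=\lambda_0$ gives $M(x,\xi)=M(x,\lambda_0\,\xi/\lambda_0)\le\lambda_0\,M(x,\xi/\lambda_0)$ a.e., and integrating while using $\rho_M(\xi/\lambda_0)\le1$ from the previous step yields
\[
\rho_M(\xi)\le\lambda_0\,\rho_M(\xi/\lambda_0)\le\lambda_0=\|\xi\|_{L_M(\Omega)},
\]
which is the claimed estimate.

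For part (2), where $\lambda_0>1$: fix any $\lambda$ with $1\le\lambda<\lambda_0$. Since $\lambda<\inf S$ we have $\lambda\notin S$, i.e. $\rho_M(\xi/\lambda)>1$, while the scaling inequality with $t=1/\lambda\le1$ gives $M(x,\xi/\lambda)\le(1/\lambda)M(x,\xi)$ a.e. and hence $\rho_M(\xi/\lambda)\le(1/\lambda)\rho_M(\xi)$. Combining the two, $\rho_M(\xi)>\lambda$, and letting $\lambda\uparrow\lambda_0$ gives $\rho_M(\xi)\ge\lambda_0=\|\xi\|_{L_M(\Omega)}$. Note that this case uses only the definition of the infimum, not its attainment, so it is unconditional.
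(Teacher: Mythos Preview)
Your argument is correct and is precisely the standard one. The paper does not supply its own proof of this lemma---it is quoted verbatim as Lemma~3.1.14 from the reference \cite{book}---so there is no in-paper argument to compare against; the textbook proof there proceeds in essentially the way you describe, via convexity of $M(x,\cdot)$ with $M(x,0)=0$ to get the scaling inequality $M(x,t\xi)\le t\,M(x,\xi)$ for $t\in(0,1]$, together with the attainment $\rho_M(\xi/\lambda_0)\le 1$ obtained by monotone convergence.
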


Throughout the paper we take advantage of the corollary of the Vitali convergence theorem. For that we require the concept of uniform integrability.

\begin{Def}[Uniform integrability]
Let $\Omega \subset \R^d$ have finite measure. A sequence $\{f_n\} \subset L^1(\Omega)$ is uniformly integrable if 
for any $\varepsilon>0$ there exists a $\delta>0$ such that for every measurable set $E$, if $|E|<\delta$, then $\int_E |f_n| \, dx <\varepsilon$ for every $n \in \N$.

\end{Def}

 The following consequence of  in the Vitali convergence theorem results from \cite[Theorem 8.23]{Els05}. Since we assume that $\Omega$ has a finite measure,  we use pointwise convergence instead of convergence in measure.

\begin{lem}\label{lem:vitali}
If $f_n,f \in L^1(\Omega),$ $n=1,2,\dots$, and $|\Omega|< \infty$, $f_n \to f$ almost everywhere in $\Omega$ and $\{f_n\}$ is uniformly integrable in $L^1(\Omega)$, then $f_n \to f$ in $L^1(\Omega)$.
\end{lem}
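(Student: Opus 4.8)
The plan is to prove this by the classical route to the Vitali convergence theorem, combining Egorov's theorem with the uniform integrability hypothesis. Fix $\varepsilon>0$. First I would use the uniform integrability of $\{f_n\}$ to choose $\delta>0$ such that $\int_E|f_n|\,dx<\varepsilon$ for every $n\in\N$ whenever $E\subset\Omega$ is measurable with $|E|<\delta$. Applying Fatou's lemma to the sequence $\{|f_n|\chi_E\}$ then shows that the limit satisfies the same bound $\int_E|f|\,dx\le\varepsilon$ on every such set $E$; note $f\in L^1(\Omega)$ is assumed, so this step only sharpens the control near small sets.

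Next, since $|\Omega|<\infty$ and $f_n\to f$ almost everywhere, Egorov's theorem supplies a measurable set $E_\varepsilon\subset\Omega$ with $|E_\varepsilon|<\delta$ such that $f_n\to f$ uniformly on $\Omega\setminus E_\varepsilon$. Splitting the integral and using the triangle inequality gives
\[
\int_\Omega|f_n-f|\,dx=\int_{\Omega\setminus E_\varepsilon}|f_n-f|\,dx+\int_{E_\varepsilon}|f_n-f|\,dx\le|\Omega|\sup_{\Omega\setminus E_\varepsilon}|f_n-f|+\int_{E_\varepsilon}|f_n|\,dx+\int_{E_\varepsilon}|f|\,dx.
\]
By the choice of $\delta$ and $E_\varepsilon$ the last two terms are each at most $\varepsilon$, while the uniform convergence on $\Omega\setminus E_\varepsilon$ (together with $|\Omega|<\infty$) forces the first term to $0$ as $n\to\infty$. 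Hence $\limsup_{n\to\infty}\int_\Omega|f_n-f|\,dx\le2\varepsilon$, and since $\varepsilon>0$ was arbitrary, $f_n\to f$ in $L^1(\Omega)$.

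I do not anticipate a genuine obstacle: the only points needing care are that the finiteness of $|\Omega|$ is used twice (to invoke Egorov's theorem and to bound the uniform-convergence term by $|\Omega|\sup$), and that the uniform-integrability estimate must be transferred to the limit function $f$ via Fatou's lemma. An equivalent argument avoids Egorov by decomposing $\int_\Omega|f_n-f|\,dx$ over $\{|f_n-f|\le\eta\}$ and $\{|f_n-f|>\eta\}$ and using that the latter set has measure tending to $0$ (by a.e.\ convergence on a finite-measure space) combined with uniform integrability of $\{f_n\}$ and of $f$; I would present the Egorov version as the cleaner one, since it is precisely the form cited from \cite{Els05}.
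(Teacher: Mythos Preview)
The paper does not supply its own proof of this lemma; it merely cites \cite[Theorem~8.23]{Els05} and remarks that on a finite-measure domain pointwise convergence may replace convergence in measure. Your Egorov-based argument is correct and is precisely the classical proof underlying that citation, so there is nothing to add.
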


Since $N$-functions are superlinear, they allow us to detect uniform integrability.
\begin{lem}[Lemma 3.4.2 in \cite{book}]\label{lem:uniformly-integrable}
Suppose $M$ is an $N$-function and let $\{\xi_n\}_{n=1}^\infty$ be a sequence of measurable functions $\xi_n: \Omega \to \R^d$ satisfying 
\begin{equation*}
   \sup_{n\in \mathbb{N}}\int_{\Omega}M(x,\xi_n(x))dx < \infty.  
\end{equation*}
Then the sequence $\{\xi_n\}_{n=1}^\infty$ is uniformly integrable in $L^1(\Omega,\R^d)$.
\end{lem}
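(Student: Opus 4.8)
The statement to be proven is Lemma~\ref{lem:uniformly-integrable}: if $M$ is an $N$-function and $\{\xi_n\}$ satisfies $\sup_n \int_\Omega M(x,\xi_n)\,dx =: K < \infty$, then $\{\xi_n\}$ is uniformly integrable in $L^1(\Omega,\R^d)$. The core idea is that superlinearity of $M$ at infinity forces the contribution of the ``large values'' of $\xi_n$ to the $L^1$-mass to be uniformly small, independently of $n$; this is a de~la~Vall\'ee~Poussin type argument, but carried out at the level of the lower bound $m_1$ rather than $M$ itself.

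\textbf{Step 1: Reduce to a scalar Young function.} By the definition of an $N$-function there is a Young function $m_1:[0,\infty)\to[0,\infty)$ with $m_1(|\xi|)\le M(x,\xi)$ for a.e.\ $x$, so $\sup_n \int_\Omega m_1(|\xi_n(x)|)\,dx \le K$. It therefore suffices to prove: if a scalar Young function $m$ is superlinear at infinity and $\sup_n \int_\Omega m(|\xi_n|)\,dx \le K$, then $\{|\xi_n|\}$ is uniformly integrable. Fix $\eps>0$, to be matched with a $\delta>0$.

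\textbf{Step 2: Split each integral into a bounded part and a tail.} For a threshold $t>0$ and a measurable $E\subset\Omega$ write
\begin{equation*}
\int_E |\xi_n|\,dx = \int_{E\cap\{|\xi_n|\le t\}} |\xi_n|\,dx + \int_{E\cap\{|\xi_n|> t\}} |\xi_n|\,dx \le t\,|E| + \int_{\{|\xi_n|>t\}} |\xi_n|\,dx.
\end{equation*}
The first term is controlled by choosing $\delta$ small once $t$ is fixed: $|E|<\delta$ gives $t|E|<\eps/2$. For the tail, use that $\lim_{s\to\infty} m(s)/s=\infty$: given any $L>0$ there is $s_L$ with $m(s)\ge L s$ for $s\ge s_L$. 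Hence for $t\ge s_L$,
\begin{equation*}
\int_{\{|\xi_n|>t\}} |\xi_n|\,dx \le \frac1L \int_{\{|\xi_n|>t\}} m(|\xi_n|)\,dx \le \frac{K}{L}.
\end{equation*}
Choosing $L = 2K/\eps$ and then $t = s_L$ makes the tail at most $\eps/2$ for every $n$ simultaneously. Finally pick $\delta = \eps/(2t)$; then $|E|<\delta$ yields $\int_E|\xi_n|\,dx<\eps$ for all $n$, which is exactly uniform integrability.

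\textbf{Main obstacle.} There is no deep obstacle here; the only point requiring a little care is the quantifier order: the threshold $t$ (hence $\delta$) must be chosen \emph{after} $\eps$ but \emph{before} fixing $n$, which is exactly what the computation above does since $K$ is a uniform bound. One should also note that superlinearity at infinity is used only in the sense of the inequality $m(s)\ge Ls$ for $s$ large, which is a standard consequence of $m(s)/s\to\infty$ together with monotonicity of $m(s)/s$ for convex $m$ with $m(0)=0$; no appeal to the full de~la~Vall\'ee~Poussin converse (existence of a superlinear $m$ given uniform integrability) is needed, since $m$ is handed to us by the $N$-function structure.
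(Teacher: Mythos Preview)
Your proof is correct and follows the standard de~la~Vall\'ee~Poussin argument: reduce via the lower bound $m_1(|\xi|)\le M(x,\xi)$ to a scalar Young function, split into a bounded part and a tail, and use superlinearity at infinity to make the tail uniformly small. The paper itself does not give a proof of this lemma---it is quoted from \cite{book} (Lemma~3.4.2 there)---so there is no in-paper argument to compare against; your approach is exactly the expected one and would serve as a self-contained justification.
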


Lastly we give a few miscellaneous and useful results. 
\begin{lem}[Lemma 8.22 in \cite{book}]\label{lem:weak-conv}
Suppose $f_k, f \in L^\infty(\Omega)$ and $g_k \rightharpoonup  g$ in $L^1(\Omega)$. Assume further that there exists a $C>0$ such that $\sup_{k \in \N}\|f_k\|_{L^\infty(\Omega)} < C$ and $f_k \to f$ almost everywhere in $\Omega$. Then
\begin{equation*}
    \lim_{k\to \infty} \int_\Omega f_k g_k \, dx = \int_\Omega fg\, dx.
\end{equation*}
\end{lem}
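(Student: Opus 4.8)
The plan is to reduce the statement to the Vitali convergence theorem (Lemma~\ref{lem:vitali}) after an elementary splitting. I would write
\begin{equation*}
    \int_\Omega f_k g_k \, dx - \int_\Omega fg \, dx = \int_\Omega (f_k - f) g_k \, dx + \int_\Omega f (g_k - g) \, dx
\end{equation*}
and handle the two terms separately.

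The second term is immediate: since $|\Omega|<\infty$ we have $f\in L^\infty(\Omega)\subset (L^1(\Omega))^\ast$, so the hypothesis $g_k\rightharpoonup g$ in $L^1(\Omega)$ gives $\int_\Omega f(g_k-g)\,dx\to 0$ directly.

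For the first term I would set $h_k:=(f_k-f)g_k$. By assumption $f_k-f\to 0$ almost everywhere and $\|f_k-f\|_{L^\infty(\Omega)}\le 2C$ for all $k$, hence $h_k\to 0$ almost everywhere and $|h_k|\le 2C|g_k|$ pointwise, so $h_k\in L^1(\Omega)$. It then remains to verify that $\{h_k\}$ is uniformly integrable, and by the bound $|h_k|\le 2C|g_k|$ it is enough to know that $\{g_k\}$ is uniformly integrable. This is precisely the Dunford--Pettis theorem: a weakly convergent (hence weakly relatively compact) sequence in $L^1(\Omega)$ over a finite measure space is uniformly integrable. Alternatively one can avoid Dunford--Pettis by applying Egorov's theorem on a set $\Omega\setminus E_\delta$ with $|E_\delta|$ small, combined with the bound $\sup_k\|g_k\|_{L^1(\Omega)}<\infty$ supplied by Banach--Steinhaus, at the cost of a slightly longer $\varepsilon$-argument. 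With uniform integrability of $\{h_k\}$ established, Lemma~\ref{lem:vitali} yields $h_k\to 0$ in $L^1(\Omega)$, and in particular $\int_\Omega h_k\,dx\to 0$.

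Combining the two limits finishes the argument. I do not expect any serious obstacle here: the only point that calls for a named external tool is the uniform integrability of the weakly convergent sequence $\{g_k\}$ (Dunford--Pettis or, equivalently, Egorov plus a boundedness bound), which is the genuine content behind an otherwise routine splitting; the rest is the triangle inequality together with the already-cited convergence lemmas.
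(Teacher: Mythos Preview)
The paper does not actually prove this lemma; it simply quotes it as Lemma~8.22 from \cite{book}. So there is nothing to compare against, and the only question is whether your argument stands on its own.

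Your splitting and the treatment of $\int_\Omega f(g_k-g)\,dx$ are fine. The gap is in the first term. You write that $h_k:=(f_k-f)g_k\to 0$ almost everywhere because $f_k-f\to 0$ almost everywhere and $\|f_k-f\|_{L^\infty}\le 2C$; this inference is false. Weak convergence of $\{g_k\}$ in $L^1(\Omega)$ gives no pointwise control on $g_k(x)$, so at a fixed $x$ the product $(f_k(x)-f(x))g_k(x)$ need not tend to zero. For a concrete obstruction, take $\Omega=(0,1)$, let $\{I_k\}$ be the usual typewriter enumeration of dyadic intervals with $I_k$ at level $n(k)$, set $g_k=2^{n(k)/2}\chi_{I_k}$ (so $\|g_k\|_{L^1}=2^{-n(k)/2}\to 0$, hence $g_k\to 0$ strongly and in particular weakly in $L^1$), and set $f_k\equiv 1/\log k$, $f\equiv 0$. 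Then for every $x$ the sequence $h_k(x)$ hits values of order $2^{n/2}/n\to\infty$ infinitely often, so $h_k\not\to 0$ almost everywhere, and the hypothesis of Lemma~\ref{lem:vitali} as stated in the paper (which requires a.e.\ convergence, not convergence in measure) is not available.

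The fix is exactly the Egorov route you mention only as an aside, but it does \emph{not} let you avoid Dunford--Pettis. Given $\varepsilon>0$, use Dunford--Pettis to pick $\delta>0$ with $\sup_k\int_E|g_k|<\varepsilon$ whenever $|E|<\delta$, then Egorov to find $E_\delta$ with $|E_\delta|<\delta$ and $f_k\to f$ uniformly on $\Omega\setminus E_\delta$; split $\int_\Omega(f_k-f)g_k$ over $E_\delta$ and its complement and use $\sup_k\|g_k\|_{L^1}<\infty$ on the good set. The bound $\sup_k\|g_k\|_{L^1}<\infty$ alone (your Banach--Steinhaus remark) is not enough to control $\int_{E_\delta}|g_k|$; you genuinely need the uniform integrability of $\{g_k\}$ there.
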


\begin{lem}\label{lem:PHI}
 {Let $\Phi:\R \to \R^d$ be continuous and belong to $L^\infty(\Omega,\R^d)$. Let $u \in W^{1,1}_0(\Omega)$. Then
\begin{equation*}
    \int_\Omega \Phi(u) \cdot \nabla u \, dx  = 0.
\end{equation*}}
\end{lem}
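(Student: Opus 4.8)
The plan is to prove Lemma~\ref{lem:PHI} by reducing to the case of a \emph{gradient} vector field. Write $\Psi(t) := \int_0^t \Phi(s)\,\diff s$, which is well defined and $C^1$ since $\Phi$ is continuous, and moreover $\Psi$ is Lipschitz with constant $\|\Phi\|_{L^\infty}$ because $\Phi$ is bounded. Then $\Phi = \Psi'$, and the integrand $\Phi(u)\cdot\nabla u$ is formally the divergence of $\Psi(u)$, or more precisely, by the chain rule for Sobolev functions composed with a Lipschitz map, one has $\Psi(u) \in W^{1,1}_0(\Omega)$ with $\nabla\big(\Psi(u)\big) = \Phi(u)\,\nabla u$ almost everywhere. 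Since $u$ vanishes on $\partial\Omega$ (in the trace sense) and $\Psi(0)=0$, the function $\Psi(u)$ again lies in $W^{1,1}_0(\Omega)$, whence $\int_\Omega \nabla\big(\Psi(u)\big)\,\diff x = 0$ componentwise, which is exactly the claim.

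First I would set up the approximation: take $u_k \in C^\infty_c(\Omega)$ with $u_k \to u$ in $W^{1,1}_0(\Omega)$. For each smooth, compactly supported $u_k$ the identity is classical integration by parts (each component of $\Psi(u_k)$ is $C^1_c(\Omega)$, so its integral of the divergence vanishes): $\int_\Omega \Phi(u_k)\cdot\nabla u_k\,\diff x = \int_\Omega \DIV\big(\Psi(u_k)\big)\,\diff x = 0$. Then I would pass to the limit $k\to\infty$. We have $\nabla u_k \to \nabla u$ in $L^1(\Omega,\R^d)$; after extracting a subsequence, $u_k \to u$ almost everywhere, so by continuity $\Phi(u_k) \to \Phi(u)$ almost everywhere, and $\|\Phi(u_k)\|_{L^\infty} \leq \|\Phi\|_{L^\infty(\Omega)}$ is uniformly bounded. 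Hence $\Phi(u_k)\cdot\nabla u_k \to \Phi(u)\cdot\nabla u$ in $L^1(\Omega)$: indeed $\|\Phi(u_k)\cdot\nabla u_k - \Phi(u)\cdot\nabla u\|_{L^1} \leq \|\Phi\|_{L^\infty}\|\nabla u_k - \nabla u\|_{L^1} + \int_\Omega |\Phi(u_k)-\Phi(u)|\,|\nabla u|\,\diff x$, and the last term tends to $0$ by dominated convergence (the integrand is dominated by $2\|\Phi\|_{L^\infty}|\nabla u| \in L^1$ and converges to $0$ a.e.). Taking the limit in the integral identity gives $\int_\Omega \Phi(u)\cdot\nabla u\,\diff x = 0$.

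The only mild subtlety—and the place I would be most careful—is the density step: I should invoke density of $C^\infty_c(\Omega)$ in $W^{1,1}_0(\Omega)$, which holds on the Lipschitz domain $\Omega$ by definition of $W^{1,1}_0$ as the closure of $C^\infty_c(\Omega)$ in the $W^{1,1}$-norm, so no extra regularity of $\partial\Omega$ is actually needed here. (Alternatively one may cite the Lipschitz chain rule directly, but the approximation argument is self-contained and avoids appealing to a chain-rule statement not recorded in the preliminaries.) Everything else is a routine combination of uniform $L^\infty$ bounds, a.e.\ convergence, and dominated convergence, so no genuine obstacle arises.
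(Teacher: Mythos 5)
Your proposal is correct and rests on the same key observation as the paper's proof: antidifferentiating $\Phi$ to get $\Psi$ (the paper calls the components $G_k$) turns the integrand $\Phi(u)\cdot\nabla u$ into a divergence $\DIV\big(\Psi(u)\big)$, and the zero boundary condition kills the integral. Where you genuinely diverge is in the execution. The paper applies integration by parts directly to $G_k(u)$ for $u \in W^{1,1}_0(\Omega)$, writing out a boundary term that vanishes because $u$ has zero trace and $G_k(0)=0$; this is quick but implicitly invokes both the chain rule for $W^{1,1}_0$ functions composed with a Lipschitz primitive and enough trace theory to make the boundary integral meaningful. You instead pull the identity back to the smooth case, where it is elementary, and then pass to the limit along a $W^{1,1}_0$-approximating sequence $u_k \in C^\infty_c(\Omega)$ using the uniform bound $\|\Phi\|_{L^\infty}$, a.e. convergence of $u_k$ (after extracting a subsequence), and dominated convergence for the term $\int|\Phi(u_k)-\Phi(u)|\,|\nabla u|$. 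This is self-contained and sidesteps the chain-rule and trace subtleties entirely; the trade-off is two extra paragraphs. One small notational slip: since $\Psi:\R\to\R^d$, the object you want is $\DIV\big(\Psi(u)\big)=\Phi(u)\cdot\nabla u$ rather than $\nabla\big(\Psi(u)\big)$, which is a $d\times d$ matrix; you clearly have the scalar divergence in mind, and the approximation argument itself never touches this, so the proof stands.
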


\begin{proof}
Since $\Phi$ is continuous, for each of its components $\Phi_k$ there exists a function $G_k(t) = \int_0^t \Phi_k(s) \, ds$ so that $(G_k(u))' = \Phi_k(u)\dfrac{\partial u}{\partial x_k}$ and $G_k(0)=0$. Since each $G_k$ is defined as an integral, all of them are absolutely continuous. Therefore, by denoting as $dS$ the $(n-1)$-dimensional Hausdorff measure we can use integration by parts and note that $u=0$ on $\partial \Omega$ to get
\begin{align*}
    \int_\Omega \Phi(u)\cdot \nabla u \, dx &= \sum_{k=1}^d \int_\Omega \Phi_k(u) \frac{\partial u}{\partial x_k} \, dx = \sum_{k=1}^d  \int_\Omega \dfrac{\partial}{\partial x_k}(G_k(u)) \, dx \\
    &= \sum_{k=1}^d  \left\{\int_{\partial \Omega} G_k(u) \cdot 1 \, dS - \int_{\Omega} G_k(u) \cdot 0 \, dx\right\} = 0.
\end{align*}
\end{proof}
\begin{lem}[Section 9.1 in \cite{Eva10}]\label{lem:zeros}
Let $s:\R^d\to \R^d$ be a continuous mapping and 
\begin{equation*}
    s(x)\cdot x \geq 0 \quad \mbox{if} \quad |x|=r
\end{equation*}
for some $r>0$. Then there is a point $x$ with $|x|\leq r$ such that $s(x)=0$. 
\end{lem}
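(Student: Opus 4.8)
The plan is to reduce the statement to Brouwer's fixed point theorem by the standard contrapositive argument, so essentially no analysis beyond one invocation of a topological fact is needed. Suppose, contrary to the conclusion, that $s(x)\neq 0$ for every $x$ in the closed ball $\overline{B_r}=\{x\in\R^d:|x|\leq r\}$. Since $s$ is continuous and $\overline{B_r}$ is compact, $|s|$ is bounded below on $\overline{B_r}$ by a positive constant, so the map
\[
  h(x):=-r\,\frac{s(x)}{|s(x)|}
\]
is well defined and continuous on $\overline{B_r}$, and it takes values in $\overline{B_r}$ (indeed on the sphere $|x|=r$). Being a continuous self-map of a nonempty compact convex set, $h$ has a fixed point $x_0\in\overline{B_r}$ by Brouwer; from $|h(x_0)|=r$ we get $|x_0|=r$.

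The next step is to test the fixed-point identity against $x_0$ itself. Using $x_0=h(x_0)=-r\,s(x_0)/|s(x_0)|$ and $s(x_0)\neq 0$,
\[
  s(x_0)\cdot x_0=-r\,\frac{s(x_0)\cdot s(x_0)}{|s(x_0)|}=-r\,|s(x_0)|<0,
\]
which contradicts the hypothesis $s(x)\cdot x\geq 0$ on $|x|=r$, because $|x_0|=r$. Hence the assumption fails and there exists $x$ with $|x|\leq r$ and $s(x)=0$.

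\textbf{Main obstacle and alternatives.} There is no real obstacle here beyond citing the topological input; the rest is a one-line computation. The only points worth stating carefully are that $h$ is continuous \emph{up to the boundary} (immediate from continuity and nonvanishing of $s$ on the compact set $\overline{B_r}$) and that the hypothesis, although imposed only on the sphere, is applied precisely at the fixed point $x_0$, which lies on the sphere. If one prefers to avoid Brouwer's theorem directly, the same conclusion follows from the Leray--Schauder degree: assuming $s$ does not vanish on $\partial B_r$ (otherwise the conclusion is already attained), the straight-line homotopy $H_t(x)=(1-t)s(x)+tx$ satisfies $H_t(x)\cdot x=(1-t)\,s(x)\cdot x+t|x|^2\geq t r^2$, so $H_t$ is nonvanishing on $\partial B_r$ for every $t\in[0,1]$; homotopy invariance then gives $\deg(s,B_r,0)=\deg(\mathrm{id},B_r,0)=1\neq 0$, which forces a zero of $s$ inside $B_r$. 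Either route establishes the claim.
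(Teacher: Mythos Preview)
Your proof is correct. The paper itself does not supply a proof of this lemma but simply cites Section~9.1 of Evans~\cite{Eva10}; the argument you give --- assume $s$ has no zero on $\overline{B_r}$, push to the sphere via $h(x)=-r\,s(x)/|s(x)|$, invoke Brouwer to get a fixed point $x_0$ with $|x_0|=r$, and derive $s(x_0)\cdot x_0=-r|s(x_0)|<0$ --- is exactly the proof found there, so your approach coincides with the reference the paper relies on.
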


\begin{lem}[Monotonicity trick, Theorem 4.1.1 in \cite{book}]\label{lem:mon-trick}
Suppose $\cA:\Omega \times \R^d \to \R^d$ satisfies growth and coercivity conditions (A1) and (A2) for an $N$-function $M:\Omega \times \R^d \to [0,\infty)$. Let $h \in L_{M^\ast}(\Omega)$ and $\xi \in L_M(\Omega)$ satisfy
\begin{equation*}
    \int_\Omega \big(h-\cA(x,\eta) \big) \cdot (\xi-\eta) \, dx \geq 0 \quad \text{for all } \eta \in \R^d.
\end{equation*}
Then $h = \cA(x,\xi)$ almost everywhere in $\Omega$.
\end{lem}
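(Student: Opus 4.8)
The plan is to promote the single global inequality into a pointwise inequality valid in every direction and then finish with a pointwise Minty--Browder argument; monotonicity of $\cA$ will play no role, only its Carath\'eodory continuity in the second variable and the growth side of (A2). First note that $h,\xi\in L^1(\Omega)$, since $m_1(|\xi|)\le M(x,\xi)\le m_2(|\xi|)$, conjugation, and $|\Omega|<\infty$ give $L_M(\Omega)\cup L_{M^\ast}(\Omega)\subset L^1(\Omega)$. Moreover, for a fixed constant vector $z\in\R^d$ the bound $M(x,c_4^{\cA}z)\le m_2(c_4^{\cA}|z|)$ combined with the growth part of (A2) yields $M^{\ast}(\cdot,c_3^{\cA}\cA(\cdot,z))\in L^1(\Omega)$, hence $\cA(\cdot,z)\in L_{M^{\ast}}(\Omega)$; by H\"older's inequality in Musielak--Orlicz spaces the function $x\mapsto(h(x)-\cA(x,z))\cdot(\xi(x)-z)$ then belongs to $L^1(\Omega)$ for every such $z$.

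\emph{Localization.} Fix $z\in\R^d$ and a measurable set $A\subseteq\Omega$ and test the hypothesis against the field $\eta(x):=z\,\chi_A(x)+\xi(x)\,\chi_{\Omega\setminus A}(x)$, which coincides with $\xi$ off $A$ and equals $z$ on $A$. On $\Omega\setminus A$ we have $\xi-\eta=0$, so the inequality collapses to $\int_A(h(x)-\cA(x,z))\cdot(\xi(x)-z)\,dx\ge 0$. Taking $A=\{x:(h(x)-\cA(x,z))\cdot(\xi(x)-z)<0\}$ forces $(h(x)-\cA(x,z))\cdot(\xi(x)-z)\ge 0$ for a.e.\ $x\in\Omega$. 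Now pick a countable dense set $\{z_j\}_{j\in\N}\subset\R^d$ and discard the countable union of the corresponding exceptional null sets together with the null set off which $z\mapsto\cA(x,z)$ fails to be continuous (available by (A1)). On the remaining set of full measure, continuity of $\cA(x,\cdot)$ and density let us pass from $\{z_j\}$ to all of $\R^d$, obtaining
\[
\big(h(x)-\cA(x,z)\big)\cdot\big(\xi(x)-z\big)\ \ge\ 0\qquad\text{for every }z\in\R^d .
\]

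\emph{Pointwise Minty step.} Fix such an $x$, take $\zeta\in\R^d$ and $t>0$, and insert $z=\xi(x)-t\zeta$; since then $\xi(x)-z=t\zeta$, the displayed inequality reads $t\,(h(x)-\cA(x,\xi(x)-t\zeta))\cdot\zeta\ge 0$. Dividing by $t$ and letting $t\to 0^+$, continuity of $\cA(x,\cdot)$ gives $(h(x)-\cA(x,\xi(x)))\cdot\zeta\ge 0$; applying this to both $\zeta$ and $-\zeta$ forces $(h(x)-\cA(x,\xi(x)))\cdot\zeta=0$ for all $\zeta\in\R^d$, i.e.\ $h(x)=\cA(x,\xi(x))$. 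Since this holds on a set of full measure, $h=\cA(\cdot,\xi)$ almost everywhere in $\Omega$.

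The principal obstacle is the localization step, precisely the legitimacy of the $\xi$-adapted competitors $\eta=z\chi_A+\xi\chi_{\Omega\setminus A}$ in the hypothesis and the integrability of $(h-\cA(\cdot,z))\cdot(\xi-z)$: because no $\Delta_2$ or $\nabla_2$ condition is assumed, the verification $\cA(\cdot,z)\in L_{M^{\ast}}(\Omega)$ via the growth side of (A2), and hence the $L^1$-membership of the integrand, is a genuine step rather than a formality. Everything downstream is elementary -- the measure-theoretic extraction of the pointwise inequality, the countable-dense-set argument powered by Carath\'eodory continuity, and the one-line Minty limit. (Condition (A3) is not used here; it is what is typically invoked upstream, jointly with a weak-$\ast$ limit, to manufacture the hypothesis of this lemma in concrete applications.)
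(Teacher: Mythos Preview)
There is a real gap in the localization step. The hypothesis of the lemma, read literally, provides the inequality only for \emph{constant} vectors $\eta\in\R^d$; with that reading the statement is in fact false (take $d=1$, $\Omega=(0,1)$, $\cA(x,s)=s$, $\xi\equiv 0$, $h=\chi_{(0,1/2)}-\chi_{(1/2,1)}$: then $\int_0^1(h-\eta)(0-\eta)\,dx=\eta^2\ge 0$ for every constant $\eta$, yet $h\not\equiv 0=\cA(\cdot,\xi)$). The intended hypothesis---and exactly what is verified in Step~3 of the proof of Theorem~\ref{theo:main}---is that the inequality holds for all $\eta\in L^\infty(\Omega;\R^d)$. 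But your competitor $\eta(x)=z\chi_A(x)+\xi(x)\chi_{\Omega\setminus A}(x)$ is still inadmissible under that hypothesis, because $\xi$ lies only in $L_M(\Omega)$ and need not be bounded. You correctly label ``the legitimacy of the $\xi$-adapted competitors'' as the principal obstacle, but you do not overcome it; the $L^1$-membership of $(h-\cA(\cdot,z))\cdot(\xi-z)$ that you establish is a separate, easier point and does not justify inserting a non-admissible $\eta$ into the assumed inequality.

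The paper itself does not prove this lemma (it is quoted from \cite{book}), so there is no in-paper argument to compare against line by line. The proof in the cited reference does not feed $\xi$ back into the competitor; it stays within bounded $\eta$ throughout and reaches the pointwise conclusion via a limiting procedure rather than a one-shot localization. If you want to salvage your architecture, replace $\xi\chi_{\Omega\setminus A}$ by a truncation $T_k(\xi)\chi_{\Omega\setminus A}$ and then justify that the residual $\int_{\Omega\setminus A}(h-\cA(x,T_k\xi))\cdot(\xi-T_k\xi)\,dx\to 0$ as $k\to\infty$; in the fully anisotropic, non-doubling setting this limit is itself nontrivial (note that $|T_k\xi|\le|\xi|$ does not automatically give $M(x,c\,T_k\xi)\le M(x,c\,\xi)$), and it is precisely the work your shortcut is trying to bypass.
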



\section{Galerkin solutions}

In this section we discuss finite dimensional approximations of our problem \eqref{eq:main} and their solutions, called Galerkin solutions. In the proof of Theorem~\ref{theo:main} the existence of weak solutions is found as a limit of Galerkin solutions when the dimension of the approximating problem is increased. In the lemmas below we show the existence and boundedness properties of Galerkin solutions required for compactness arguments. The proofs follow similar scheme as in \cite[Theorem~4.1.2]{book}, but here we skip the structural assumption $M^*\in\Delta_2$ that influence the functional analysis of the underlying space, and include the analysis of lower order terms. 

Since $C^\infty_c(\Omega)$ is separable and dense in $C^1_c(\Omega)$ we can extract a sequence of $\{w_i\}_{i=1}^\infty \subset C^\infty_c(\Omega)$ such that $\overline{\text{span}\{w_1,w_2,\dots\}}^{C^1_c}=C^1_c(\Omega)$. We denote the finite dimensional spaces as $V_n := \text{span}\{w_1,\dots w_n\}$.

\begin{Def}\label{def:Galerkin}
For every $n=1,2, \dots$ and spaces $V_n$ defined above, we say a function $u_n \in V_n$ is called a Galerkin solution to the problem \eqref{eq:main} if 
\begin{equation}\label{eq:Galerkin}
        \int_\Omega \cA(x,\nabla u_n)\cdot \nabla \omega +\Phi(u_n)\cdot \nabla \omega + b(x,u_n)\omega\, dx = \int_\Omega F\cdot  \nabla \omega \ dx.
    \end{equation} 
holds for every $\omega \in V_n$.
\end{Def}

\begin{lem}[Existence of Galerkin solutions]\label{lem:Galerkinsolution}
Let $\Omega \subset \R^d$ be a bounded Lipschitz domain. Assume $\cA$ satisfies (A1), (A2) and (A3) and $M:\Omega \times \R^d \to [0,\infty)$ is an $N$-function.  Let  $F\in E_{M^\ast}(\Omega)$, $b: \Omega\times \R \to \R$  satisfies condition (b) and $\Phi:\R \to \R^d$ satisfy (P). Then for every $n \in \N$, there exists a Galerkin solution in the sense of \eqref{eq:Galerkin}.

\end{lem}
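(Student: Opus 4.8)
The plan is to solve the finite-dimensional problem by a fixed-point/topological-degree argument, specifically via the corollary of Brouwer's theorem stated in Lemma~\ref{lem:zeros}. For fixed $n$, identify $V_n \cong \R^n$ via the basis $\{w_1,\dots,w_n\}$, so that a function $v = \sum_{j=1}^n \alpha_j w_j \in V_n$ corresponds to the coefficient vector $\alpha = (\alpha_1,\dots,\alpha_n) \in \R^n$. Define the map $\mathcal{G}_n : \R^n \to \R^n$ componentwise by
\begin{equation*}
    \big(\mathcal{G}_n(\alpha)\big)_i := \int_\Omega \cA(x,\nabla v_\alpha)\cdot \nabla w_i + \Phi(v_\alpha)\cdot \nabla w_i + b(x,v_\alpha) w_i - F \cdot \nabla w_i \, dx,
\end{equation*}
where $v_\alpha = \sum_j \alpha_j w_j$. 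A Galerkin solution in the sense of \eqref{eq:Galerkin} is precisely a zero of $\mathcal{G}_n$.

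First I would check that $\mathcal{G}_n$ is well-defined and continuous on $\R^n$. Well-definedness needs each integrand to be in $L^1(\Omega)$: the term $\cA(x,\nabla v_\alpha)\cdot\nabla w_i$ is controlled using (A2) (the conjugate bound on $\cA$ together with Fenchel--Young and the fact that $\nabla w_i$ is bounded, so lies in $E_M$, hence pairs with $L_{M^\ast}$), the term $\Phi(v_\alpha)\cdot\nabla w_i$ is bounded by (P), the term $b(x,v_\alpha)w_i$ is in $L^1$ because $v_\alpha$ ranges over a bounded set (being a finite-dimensional element with $\|v_\alpha\|_{L^\infty}$ controlled by $|\alpha|$ and the fixed smooth basis) so that $|b(x,v_\alpha)| \le b(x, R) - b(x,-R) =: g_R(x) \in L^1(\Omega)$ for $|v_\alpha| \le R$ by monotonicity of $b$ and (b), and $F\cdot\nabla w_i \in L^1$ since $F \in E_{M^\ast} \subset L^1$. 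Continuity of $\alpha \mapsto \mathcal{G}_n(\alpha)$ follows from the Carathéodory property of $\cA$ and $b$ and continuity of $\Phi$, combined with dominated convergence (using the same $L^1$ majorants, which are uniform on bounded sets of $\alpha$) and the continuity of $\alpha \mapsto (v_\alpha, \nabla v_\alpha)$ into $L^\infty$.

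The key step is the coercivity estimate $\mathcal{G}_n(\alpha)\cdot \alpha \geq 0$ for $|\alpha|$ large. Testing with $\omega = v_\alpha$ itself, i.e. computing $\mathcal{G}_n(\alpha)\cdot\alpha = \int_\Omega \cA(x,\nabla v_\alpha)\cdot\nabla v_\alpha + \Phi(v_\alpha)\cdot\nabla v_\alpha + b(x,v_\alpha)v_\alpha - F\cdot\nabla v_\alpha \, dx$, I would bound below: the first term by $\int_\Omega M(x, c_1^\cA \nabla v_\alpha) \, dx - \|h_1\|_{L^1}$ using the coercivity half of (A2); the $\Phi$ term vanishes identically by Lemma~\ref{lem:PHI} since $v_\alpha \in W^{1,1}_0(\Omega)$; the term $b(x,v_\alpha)v_\alpha \ge 0$ pointwise by the sign condition in (b); and the $F$ term is estimated via Fenchel--Young and Young's inequality with a small parameter, $|\int_\Omega F\cdot\nabla v_\alpha| \le \int_\Omega M^\ast(x, \tfrac{c_3^\cA}{\text{const}} F) + M(x, \text{const}\cdot\nabla v_\alpha)\,dx$, absorbing the $M$-part into the coercive term (after adjusting constants, using convexity/$\Delta_2$-free manipulations as in \cite[Theorem~4.1.2]{book}). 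This yields $\mathcal{G}_n(\alpha)\cdot\alpha \geq c\int_\Omega M(x, c' \nabla v_\alpha)\,dx - C$ for structural constants $c,c',C>0$. On the finite-dimensional space $V_n$ all norms are equivalent, and since $v_\alpha \mapsto \|\nabla v_\alpha\|_{L_M}$ is a norm on $V_n$ (the $w_i$ being linearly independent with nonzero gradients — one may assume this WLOG, or argue through $\|v_\alpha\|_{W^{1,1}_0}$ via Poincaré), it follows that $|\alpha| \to \infty$ forces $\|\nabla v_\alpha\|_{L_M} \to \infty$, hence by Lemma~\ref{lem:LM}(2) the modular $\int_\Omega M(x,c'\nabla v_\alpha)\,dx \to \infty$. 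Therefore $\mathcal{G}_n(\alpha)\cdot\alpha > 0$ for all $|\alpha| = r$ with $r$ sufficiently large. Applying Lemma~\ref{lem:zeros} to $s = \mathcal{G}_n$ produces a zero $\alpha^\ast$ with $|\alpha^\ast| \le r$, and $u_n := v_{\alpha^\ast}$ is the desired Galerkin solution.

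The main obstacle I anticipate is the coercivity bookkeeping: keeping track of the various structural constants $c_1^\cA,\dots,c_4^\cA$ through the Fenchel--Young and Young-with-$\eps$ steps so that the $M(x,\cdot\,\nabla v_\alpha)$ coming from the Young estimate on the $F$-term (and potentially from handling $\Phi$, though here it conveniently vanishes) can genuinely be absorbed into the coercive term $\int_\Omega M(x, c_1^\cA \nabla v_\alpha)\,dx$ without any doubling condition on $M$ — one cannot simply rescale freely. This is handled by choosing the Young-inequality parameter so the gradient-argument constant is strictly smaller than $c_1^\cA$ and invoking the monotonicity of $t \mapsto M(x,t\xi)$ together with convexity $M(x,\theta\xi) \le \theta M(x,\xi)$ for $\theta \in [0,1]$, exactly as in the doubling-free argument of \cite[Theorem~4.1.2]{book}; the lower-order term $b$ causes no trouble because its contribution has a favourable sign and requires no growth control, which is precisely the advantage of the sign condition in (b).
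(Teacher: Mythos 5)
Your proposal is correct and follows essentially the same route as the paper: identify $V_n\cong\R^n$, define the componentwise map from the residual of~\eqref{eq:Galerkin}, prove continuity via the Carath\'eodory property and an integrability/convergence theorem, establish the sign condition $s(\vec\alpha)\cdot\vec\alpha\ge 0$ for large $|\vec\alpha|$ by combining the coercivity half of (A2), the vanishing of $\int_\Omega\Phi(v_\alpha)\cdot\nabla v_\alpha\,dx$ (Lemma~\ref{lem:PHI}), the sign condition (b), and a Fenchel--Young absorption of the $F$-term, then conclude via Lemma~\ref{lem:zeros}. The only stylistic divergence is in the continuity step: you invoke dominated convergence with $L^1$-majorants uniform on $\alpha$-balls, whereas the paper prefers uniform integrability from the superlinearity of $M$ (Lemma~\ref{lem:uniformly-integrable}) together with Vitali's theorem (Lemma~\ref{lem:vitali}) — the latter avoids having to manufacture a pointwise majorant for $\cA(x,\nabla v_\alpha)$, which otherwise requires a small detour through (A2), the lower Young-function bound $m_2$ on $M$, and sublinearity of $(m_2^\ast)^{-1}$; your route is workable but merits that extra justification since $M^\ast$ is not assumed doubling.
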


\begin{proof}
Every $u_n \in V_n$ can be written in a form $u_n = \sum_{k=1}^n \alpha_k w_k$ for $n \in \N$, where $\alpha_k \in \R$. We show that there exist such $\{\alpha_k\}_{k=1}^n$ such that 
\begin{equation*}
    \int_\Omega \cA(x,\nabla u_n) \cdot \nabla w_k + \Phi(u_n)\cdot \nabla w_k+ b(x,u_n)w_k \, dx = \int_\Omega F\cdot \nabla w_k \, dx.
\end{equation*}

We define $\vec{\alpha} = (\alpha_1,\alpha_2, \dots \alpha_n) \in \R^n$, a mapping $s:\R^n \to \R^n$ given coordinate by  the formula
\begin{equation*}
    s_j(\vec\alpha) := \int_\Omega \cA\left(x, \sum_{k=1}^n \alpha_k \nabla w_k\right) \cdot \nabla w_j +\Phi\left(\sum_{k=1}^n \alpha_k  w_k\right)\cdot \nabla w_j +b\left(x,\sum_{k=1}^n \alpha_k  w_k\right)w_j  - F\cdot \nabla w_j \, dx
\end{equation*}
and \[\omega(\vec\alpha) :=\sum_{k=1}^n \alpha_k w_k.\] In order to prove that $s$ takes the value zero, first we show that $s$ is continuous and for that we choose a sequence $\{\vec a^i\}_{i=1}^\infty$ such that $\vec a^i \to \vec\alpha$ in $\R^n$. Consider
\begin{equation*}
\begin{split}
      |s_j(\vec a^i) - s_j(\vec\alpha)| =&\bigg|\int_{\Omega} h_{i,j}(x)\,dx\bigg| 
\end{split}
\end{equation*}
for all $j=1, \dots, n$ and every $i \in \N$, where 
\begin{equation*}
    h_{i,j}(x) := \big(\cA(x,\nabla \omega(\vec a^i)) - \cA(x,\nabla \omega(\vec \alpha))\big) \cdot \nabla w_j + \big(\Phi(\omega(\vec a^i))-\Phi(\omega(\vec \alpha))\big) \cdot \nabla w_j + \big(b(x, \omega(\vec a^i)) - b(x,\omega(\vec \alpha))\big) w_j.
\end{equation*}
Since $\cA$ and $b$ are Carath\'eodory functions and $\Phi$ is continuous, we see that for almost every $x \in \Omega$ we have $h_{i,j}(x)\to 0$  as $i \to \infty$. We show uniform integrability of the three terms in $h_{i,j}$ one by one starting with $\cA$. Assumption (A2) shows that
\begin{equation*}
    c_2M^\ast(x,c_3\cA(x,\nabla \omega(\vec a^i))) \leq M(x,c_4 \nabla \omega(\vec a^i)) + h_2(x)
\end{equation*}
and for $\vec a^i=(a^i_1,\dots, a^i_n)$
\begin{align*}
   \int_\Omega M(x,c_4 \nabla \omega(\vec a^i)) \, dx &\leq \sum_{k=1}^n \frac{|a^{i}_k|}{|\vec a^i|} \int_\Omega M(x,c_4|\vec a^i|\nabla w_k) \, dx \leq n \max_{k\in\{1,\dots,n\}} \int_\Omega M(x,c_4 |\vec a^i|\nabla w_k) \, dx 
\end{align*}
and this is finite since $\{\vec a^i\}_{i=1}^\infty$ is bounded $\R^n$ and $w_k\in C_c^\infty(\Omega)$, see \cite[Lemma~3.1.8]{book}. Now Lemma~\ref{lem:uniformly-integrable} shows the uniform integrability of $\{\cA(\cdot, \nabla \omega(\vec a^i))\}_{i=1}^\infty$ in $L^1(\Omega,\R^d)$. As $\cA(\cdot, \nabla \omega(\vec \alpha)) \in L^1(\Omega,\R^d)$ and $\nabla w_k \in L^\infty(\Omega,\R^d)$ and it follows that $\big(\cA(x,\nabla \omega(\vec a^i)) - \cA(x,\nabla \omega(\vec \alpha))\big) \cdot \nabla w_j$ is uniformly integrable in $L^1(\Omega)$.

Moreover, we notice that $|\Phi(\cdot)|\in L^\infty(\R)$ and $|\nabla w_k| \in L^\infty(\Omega)$. Thus for any set $E\subset \Omega$
\begin{equation*}
    \left |\int_E \Phi(\omega( \vec a^i)) \cdot \nabla w_k \, dx \right| \leq \max_{k \in \{1,\dots, n\}}  \int_E \||\Phi(\omega( \vec a^i))|\|_{L^\infty(\R)} \||\nabla w_k|\|_{L^\infty(\Omega)} \, dx < C |E|.
\end{equation*}
Furthermore by the fact that  $s\mapsto b(\cdot,s)$ is non-decreasing, convergence of $\vec a^i$ and boundedness of $\{w_k\}$ we get
\begin{align*}
    \left|\int_E \big(b(x, \omega(\vec a^i))\big) w_k \, dx \right|\leq   n \max_{k\in\{1,\dots,n\}} \left\{ \|w_k\|_{L^\infty(\Omega)} \int_E |b(x, |\vec a^i|\,w_k)|\, dx \right\}< C|E|.
\end{align*}
Thus all the terms in $h_{i,j}$ are uniformly integrable in $L^1(\Omega)$ and $h_{i,j}\to 0$ almost everywhere, so it follows from Vitali convergence theorem (Lemma~\ref{lem:vitali}) that
\begin{equation*}
    |s(\vec a^i)-s(\vec \alpha)| \leq n \max_{j\in\{1,\dots,n\}} \int_\Omega |h_{i,j}| \, dx \to 0
\end{equation*}
as $i \to \infty$. Thus $s$ is continuous.

Our next aim is to show that there exists a vector $\vec \alpha\in \R^n$ such that $s(\vec \alpha) = 0$. For this we will apply  Lemma \ref{lem:zeros}. Therefore we need to prove that  $s(\vec \alpha)\cdot \vec \alpha \geq 0$ if $|\vec \alpha|=R$ for some $R>0$. From (A2), Young's inequality,  the facts that $\Phi(\omega(\vec \alpha))\cdot \nabla \omega(\vec \alpha) =0$ and $b$ satisfies (b), and Lemma \ref{lem:LM}(2) we get
\begin{align*}
    s(\vec \alpha)\cdot \vec \alpha &=\int_\Omega \cA(x,\nabla \omega(\vec \alpha)) \cdot \nabla \omega(\vec \alpha)  + b(x,\omega(\vec \alpha))\omega(\vec \alpha)- F\cdot \nabla \omega(\vec \alpha) \, dx \\
    &\geq \frac{1}{2} \int_\Omega M(x,c_1^\cA \nabla \omega(\vec \alpha)) \, dx  
    - h_1(x)\, dx- \int_\Omega M^\ast\left(x, \frac{2}{c_1}F\right ) \, dx \\
    &\geq \frac{1}{2} (c_1^\cA \|\nabla \omega(\vec \alpha)\|_{L_M(\Omega)} -1)- C   
    .
\end{align*}
Note that $\vec \alpha \mapsto \|\nabla \omega(\vec \alpha)\|_{L_M(\Omega)}$ is a continuous function and therefore it attains its minimum on the unit sphere $S_1 \subset \R^n$ at some $\vec \beta=(\beta_1,\dots,\beta_n) \in S_1$. 

We show that $\|\nabla \omega(\vec \beta)\|_{L_M(\Omega)} >0$. Assume the contrary that $\|\nabla \omega(\vec \beta)\|_{L_M(\Omega)} =0$. Thus $\|\sum_{k=1}^n \beta_k \nabla w_k\|_{L^1(\Omega)} =0$ and by Poincar\'e inequality we also have $\|\sum_{k=1}^n \beta_k w_k\|_{L^1(\Omega)} =0$ (recall that $w_k \in C_c^\infty(\Omega)$). This implies that $\sum_{k=1}^n \beta_k w_k = 0$ almost everywhere in $\Omega$, i.e. $\beta_k=0$ for each $k = 1, \dots, n$ since functions $w_k$ are linearly independent. Since $\vec \beta \in S_1$ this is contradiction so $\|\nabla \omega(\vec \beta)\|_{L_M(\Omega)} >0$. 

For our choice of $\vec \beta$ we have
\begin{equation*}
    \|\nabla \omega(\vec \alpha)\|_{L_M(\Omega)}\geq  |\alpha| \left \| \nabla \omega\left(\frac{\vec \alpha}{|\vec \alpha|}\right) \right\|_{L_M(\Omega)} \geq |\vec \alpha| \|\nabla \omega (\vec \beta)\|_{L_M(\Omega)}>0.
\end{equation*}
This means that $\|\nabla \omega(\vec \alpha)\|_{L_M(\Omega)} \to \infty$ when $|\vec \alpha| \to \infty$. Thus $s(\vec \alpha)\cdot \vec \alpha \geq 0$ if $|\vec \alpha|=R$ for some $R>0$.  From the definition of $s$ we see that for  $u_n=\sum_{k=1}^n\alpha_k w_k$ we have
\begin{equation*}
    \int_\Omega \cA(x,\nabla u_n) \cdot \nabla \omega+ \Phi(u_n)\cdot \nabla \omega + b(x,u_n) \omega \, dx = \int_\Omega F\cdot \nabla \omega \, dx
\end{equation*}
and thus the Galerkin solution $u_n$ exists.
\end{proof}

\begin{lem}[Uniform boundedness of Galerkin solutions]\label{lem:Galerkin-bounded}
Let $\Omega \subset \R^d$ be a bounded Lipschitz domain. Assume $\cA$ satisfies (A1), (A2), (A3) and $M:\Omega \times \R^d \to [0,\infty)$ is an $N$-function.  Let  $F\in E_{M^\ast}(\Omega)$, $b: \Omega\times \R \to \R$  satisfy condition (b) and $\Phi:\R \to \R^d$ satisfy (P). Then there exists a constant $C$ independent of $n$ such that for every Galerkin solution $u_n$ it holds 
 \begin{equation*}
    \int_\Omega \cA(x,\nabla u_n) \cdot \nabla u_n \, dx \leq C; \quad \|\nabla u_n\|_{L_M(\Omega)} \leq C; \quad \int_\Omega b(x,u_n)u_n \, dx \leq C.
\end{equation*}
\end{lem}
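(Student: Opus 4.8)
\emph{The plan is to derive all three bounds from a single energy estimate obtained by testing the Galerkin identity with the solution itself.} Since $u_n \in V_n = \mathrm{span}\{w_1,\dots,w_n\} \subset C^\infty_c(\Omega)$, it is an admissible test function in \eqref{eq:Galerkin}, so
\begin{equation*}
\int_\Omega \cA(x,\nabla u_n)\cdot\nabla u_n\,dx + \int_\Omega \Phi(u_n)\cdot\nabla u_n\,dx + \int_\Omega b(x,u_n)u_n\,dx = \int_\Omega F\cdot\nabla u_n\,dx.
\end{equation*}
As $u_n \in W^{1,1}_0(\Omega)$ and $\Phi$ satisfies (P), Lemma~\ref{lem:PHI} eliminates the middle term on the left. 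The other two left-hand terms are nonnegative up to an $L^1$ defect: by the sign condition in (b), $b(x,u_n)u_n = b(x,u_n)\sign(u_n)\,|u_n| \geq 0$ a.e., while the coercivity in (A2) gives $\cA(x,\nabla u_n)\cdot\nabla u_n \geq M(x,c_1^\cA\nabla u_n) - h_1(x)$.

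For the right-hand side I would write $F\cdot\nabla u_n = \bigl(\tfrac{2}{c_1^\cA}F\bigr)\cdot\bigl(\tfrac{c_1^\cA}{2}\nabla u_n\bigr)$, apply the Fenchel--Young inequality, and then use convexity of $\xi \mapsto M(x,\xi)$ together with $M(x,0)=0$ to bound $M(x,\tfrac{c_1^\cA}{2}\nabla u_n) \leq \tfrac12 M(x,c_1^\cA\nabla u_n)$. Integrating over $\Omega$ and absorbing $\tfrac12\int_\Omega M(x,c_1^\cA\nabla u_n)\,dx$ into the left-hand side leaves
\begin{equation*}
\tfrac12\int_\Omega M(x,c_1^\cA\nabla u_n)\,dx + \int_\Omega b(x,u_n)u_n\,dx \;\leq\; \int_\Omega M^\ast\!\left(x,\tfrac{2}{c_1^\cA}F\right)dx + \|h_1\|_{L^1(\Omega)} =: C_0 .
\end{equation*}
Here $C_0 < \infty$ and is independent of $n$: the first integral is finite because $F \in E_{M^\ast}(\Omega)$, so $\int_\Omega M^\ast(x,\lambda F)\,dx < \infty$ for every $\lambda$, in particular $\lambda = 2/c_1^\cA$; and $h_1 \in L^1(\Omega)$. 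Since both summands on the left are nonnegative, we immediately get $\int_\Omega b(x,u_n)u_n\,dx \leq C_0$ and $\int_\Omega M(x,c_1^\cA\nabla u_n)\,dx \leq 2C_0$. The latter, together with Lemma~\ref{lem:LM}, yields $\|\nabla u_n\|_{L_M(\Omega)} \leq C$: if $\|c_1^\cA\nabla u_n\|_{L_M(\Omega)} > 1$ it is controlled by the modular $\int_\Omega M(x,c_1^\cA\nabla u_n)\,dx \leq 2C_0$, and otherwise it is at most $1$. Finally, rewriting the tested identity as $\int_\Omega \cA(x,\nabla u_n)\cdot\nabla u_n\,dx = \int_\Omega F\cdot\nabla u_n\,dx - \int_\Omega b(x,u_n)u_n\,dx$ and discarding the (nonnegative) last term, the Fenchel--Young estimate bounds the remaining integral by $C_0 + \tfrac12\int_\Omega M(x,c_1^\cA\nabla u_n)\,dx \leq 2C_0$, which is the first claimed bound.

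The argument is essentially one coercivity estimate, so there is no serious obstacle; the points that need care are (i) verifying that $u_n$ is an admissible test function and that Lemma~\ref{lem:PHI} applies to remove the $\Phi$-contribution, (ii) choosing the split in the Fenchel--Young step so that convexity produces exactly the factor $\tfrac12$ needed for absorption, and (iii) observing that the finiteness of $\int_\Omega M^\ast(x,\tfrac{2}{c_1^\cA}F)\,dx$ really uses $F \in E_{M^\ast}(\Omega)$ and not merely $F \in L_{M^\ast}(\Omega)$, for which only $\int_\Omega M^\ast(x,\lambda F)\,dx < \infty$ for \emph{some} $\lambda$ is guaranteed.
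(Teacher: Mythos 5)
Your proof is correct and follows essentially the same route as the paper: test \eqref{eq:Galerkin} with $u_n$, kill the $\Phi$-term via Lemma~\ref{lem:PHI}, use the sign condition for the $b$-term, apply the coercivity part of (A2) together with Fenchel--Young plus convexity to absorb the modular of $\nabla u_n$, and finish with Lemma~\ref{lem:LM}. The only cosmetic difference is your choice of the $\tfrac{2}{c_1^\cA}$--$\tfrac{c_1^\cA}{2}$ split (so the whole coercivity modular is absorbed, forcing a short second pass for the bound on $\int_\Omega \cA(x,\nabla u_n)\cdot\nabla u_n\,dx$), whereas the paper uses $\tfrac{4}{c_1^\cA}$--$\tfrac{c_1^\cA}{4}$ and keeps half of $\cA(x,\nabla u_n)\cdot\nabla u_n$ on the left so all three bounds drop out of a single inequality.
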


\begin{proof}
We test the equation \eqref{eq:Galerkin} by $u_n$, which is an admissible test function due to Lemma~\ref{lem:Galerkinsolution}, to get
\begin{equation*}
    \int_\Omega \cA(x,\nabla u_n) \cdot \nabla u_n + \Phi(u_n)\cdot \nabla u_n + b(x,u_n)u_n \, dx = \int_\Omega F \cdot \nabla u_n \, dx.
\end{equation*}
Using condition (A2) we get an estimate
\begin{equation*}
    \frac{1}{2}\int_{\Omega} M(x,c_1^\cA\nabla u_n) -h_1(x)\,dx \leq \frac{1}{2}\int_{\Omega}A(x,\nabla u_n)\cdot \nabla u_n \, dx.
\end{equation*}
Moreover, Fenchel--Young inequality and the definition of $N$ functions allow us to infer that
\begin{align*}
    \int_{\Omega}\frac{4}{c_1^\cA}F\cdot \frac{c_1^\cA}{4}\nabla u_n dx &\leq \int_{\Omega} M\left(x,\frac{c_1^\cA}{4}\nabla u_n\right)\, dx +\int_{\Omega} M^\ast \left(x,\frac{4}{c_1^\cA}F\right)\, dx  \\
    &\leq  \frac{1}{4} \int_{\Omega} M(x,c_1^\cA \nabla u_n) dx +\int_{\Omega} M^\ast \left(x,\frac{4}{c_1^\cA}F\right)\, dx.
\end{align*}
And thus, according to  $M(x,\xi) \geq 0$ for almost every $x \in \Omega$ and every $\xi \in \R^d$, $b(x,u_n)u_n \geq 0$ as $b$ satisfies condition (b), and $\Phi(u_n)\cdot \nabla u_n = 0$ by Lemma~\ref{lem:PHI} we have
\begin{align*}
    &\frac{1}{4} \int_{\Omega} M(x,c_1^\cA\nabla u_n) dx + \frac{1}{2}\int_{\Omega}\cA(x,\nabla u_n)\cdot \nabla u_n  dx + \int_\Omega \Phi(u_n)\cdot \nabla u_n dx +\int_{\Omega}b(x,u_n)u_n \, dx \\
    &\quad \leq \int_{\Omega} M^\ast \left(x,\frac{4}{c_1^\cA}F\right ) +\frac{1}{2}\|h_1(x)\|_{L^1(\Omega)}.
\end{align*}
Since $F\in E_{M^\ast}(\Omega)$ by assumption, the right-hand side of the latter inequality is finite and we infer that 
\begin{equation*}
 \int_\Omega \cA(x,\nabla u_n)\cdot \nabla u_n \, dx \leq C
\end{equation*}
and
\begin{equation*}
    \int_\Omega b(x,u_n) u_n \, dx \leq C.
\end{equation*}
Furthermore, by Lemma~\ref{lem:LM}
\begin{equation*}
    \|\nabla u_n\|_{L_M(\Omega)} \leq \frac{1}{c_1^\cA}\left( \int_\Omega M(x,c_1^\cA \nabla u_n) \, dx + 1 \right) \leq C,
\end{equation*}
where $C$ is independent of $n$. 
\end{proof}
The proof of the next fact follows almost the same lines as \cite[Lemma 3.8.2]{book}, but we correct some flaws and  include it for readers' convenience.

\begin{lem}\label{lem:cA-bounded}
Suppose $M$ is an $N$-function and $\cA:\Omega \times \R^d \to \R^d$ satisfies (A1), (A2), (A3) and suppose $\|\cA(\cdot, \xi)\cdot \xi\|_{L^1(\Omega)} \leq \tilde c$. Then there exists a constant $C>0$ depending only on the parameters from (A1), (A2) and $\tilde c$ such that
$\|\cA(\cdot, \xi)\|_{L_{M^\ast}(\Omega)} < C$.
\end{lem}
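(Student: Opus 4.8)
The plan is to estimate the Luxemburg norm $\|\cA(\cdot,\xi)\|_{L_{M^\ast}(\Omega)}$ directly from its definition by bounding the modular $\int_\Omega M^\ast(x, \lambda\,\cA(x,\xi(x)))\,dx$ for a suitable small $\lambda>0$. First I would invoke the second inequality in (A2), namely
\[
c_2^\cA M^\ast\big(x, c_3^\cA \cA(x,\xi)\big) \leq M(x, c_4^\cA \xi) + h_2(x),
\]
so that the task reduces to controlling $\int_\Omega M(x, c_4^\cA \xi(x))\,dx$ in terms of $\tilde c$ and the structural constants. Here $\xi$ is a fixed measurable vector field (the statement quantifies over a single $\xi\in L_M$ implicitly, since $\|\cA(\cdot,\xi)\cdot\xi\|_{L^1}$ is assumed finite), so everything is a pointwise-in-$x$ estimate integrated over $\Omega$.

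The key step is to bound $\int_\Omega M(x, c_4^\cA \xi)\,dx$ using the coercivity half of (A2), $\cA(x,\xi)\cdot\xi \geq M(x, c_1^\cA \xi) - h_1(x)$, together with the hypothesis $\|\cA(\cdot,\xi)\cdot\xi\|_{L^1(\Omega)}\leq \tilde c$. This gives $\int_\Omega M(x, c_1^\cA \xi)\,dx \leq \tilde c + \|h_1\|_{L^1(\Omega)} =: \tilde c_1$ directly. To pass from the modular with constant $c_1^\cA$ to the one with constant $c_4^\cA$, I would use convexity of $M(x,\cdot)$: if $c_4^\cA \leq c_1^\cA$ then $M(x, c_4^\cA\xi) \leq \tfrac{c_4^\cA}{c_1^\cA} M(x, c_1^\cA \xi) \leq M(x, c_1^\cA\xi)$ and we are done; if $c_4^\cA > c_1^\cA$, write $c_4^\cA \xi = \tfrac{c_4^\cA}{c_1^\cA}(c_1^\cA \xi)$ and use $M(x,t\eta)\leq t\,M(x,\eta)$ for $t\geq 1$ — wait, that inequality goes the wrong way; instead one rescales: $M(x, c_1^\cA \xi) = M\big(x, \tfrac{c_1^\cA}{c_4^\cA}(c_4^\cA\xi)\big) \leq \tfrac{c_1^\cA}{c_4^\cA} M(x, c_4^\cA\xi)$ is also the wrong direction. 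The clean fix is to absorb the constant into $\lambda$ at the very end: choose $\lambda = \tfrac{c_3^\cA c_1^\cA}{c_4^\cA}$ (or smaller) so that $\lambda\,\cA(x,\xi) = \tfrac{c_1^\cA}{c_4^\cA}\,c_3^\cA\cA(x,\xi)$, and then by convexity $M^\ast(x,\lambda\cA(x,\xi)) \leq \tfrac{c_1^\cA}{c_4^\cA} M^\ast(x, c_3^\cA\cA(x,\xi))$ when $\tfrac{c_1^\cA}{c_4^\cA}\leq 1$, while for $\tfrac{c_1^\cA}{c_4^\cA}>1$ one simply takes $\lambda=c_3^\cA$ and keeps $c_4^\cA$. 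In either case (A2) then bounds the modular by $\tfrac{1}{c_2^\cA}\big(\int_\Omega M(x, c_4^\cA\xi)\,dx + \|h_2\|_{L^1(\Omega)}\big)$, and the remaining job is to relate $\int_\Omega M(x,c_4^\cA\xi)\,dx$ to $\int_\Omega M(x,c_1^\cA\xi)\,dx \leq \tilde c_1$ — which, by the same convexity/rescaling, costs only a factor $\max\{1, c_4^\cA/c_1^\cA\}$ if one is willing to shrink $\lambda$ by that same factor. Collecting constants, one obtains $\int_\Omega M^\ast(x,\lambda\cA(x,\xi))\,dx \leq K$ for an explicit $K=K(c_1^\cA,c_2^\cA,c_3^\cA,c_4^\cA,\|h_1\|_{L^1},\|h_2\|_{L^1},\tilde c)$.

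Finally I would convert the modular bound into a norm bound. If $K\leq 1$ then $\|\lambda\cA(\cdot,\xi)\|_{L_{M^\ast}(\Omega)}\leq 1$ immediately from the definition of the Luxemburg norm, giving $\|\cA(\cdot,\xi)\|_{L_{M^\ast}}\leq 1/\lambda$. If $K>1$, then by convexity $\int_\Omega M^\ast\big(x, \tfrac{\lambda}{K}\cA(x,\xi)\big)\,dx \leq \tfrac{1}{K}\int_\Omega M^\ast(x,\lambda\cA(x,\xi))\,dx \leq 1$, so $\|\cA(\cdot,\xi)\|_{L_{M^\ast}(\Omega)} \leq K/\lambda =: C$, with $C$ depending only on the stated quantities. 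I expect the only real subtlety — the "flaw" alluded to in the text — to be exactly this bookkeeping of which convexity direction is available: one must absorb all constants $\geq 1$ into a suitable choice of $\lambda$ (and shrink $\lambda$ accordingly) rather than trying to push the $N$-function estimates in the impossible direction, and one must be careful that $\lambda$ ends up depending only on $c_1^\cA,c_3^\cA,c_4^\cA$ and not on $\xi$. Once that is arranged correctly, the argument is a short chain of (A2), Lemma~\ref{lem:LM}-type reasoning, and convexity.
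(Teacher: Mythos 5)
Your proposed proof is a direct modular estimate: bound $\int_\Omega M^\ast(x,\lambda\cA(x,\xi))\,dx$ using the upper half of (A2), reduce to $\int_\Omega M(x,c_4^\cA\xi)\,dx$, and try to control the latter via the coercivity half of (A2), which only gives $\int_\Omega M(x,c_1^\cA\xi)\,dx \leq \tilde c + \|h_1\|_{L^1}$. The gap is exactly the step you flag and then claim to repair: if $c_4^\cA > c_1^\cA$, there is \emph{no} inequality of the form $\int_\Omega M(x,c_4^\cA\xi)\,dx \leq C\int_\Omega M(x,c_1^\cA\xi)\,dx$ in the absence of $\Delta_2$, which the paper explicitly does not assume. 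Convexity with $M(x,0)=0$ gives $M(x,t\eta)\leq tM(x,\eta)$ only for $t\leq 1$, hence for $t=c_4^\cA/c_1^\cA>1$ one obtains the useless lower bound $M(x,c_4^\cA\xi)\geq \tfrac{c_4^\cA}{c_1^\cA}M(x,c_1^\cA\xi)$. Moreover, your proposed fix of shrinking $\lambda$ cannot help: $\lambda$ scales the argument of $M^\ast$ on the left of (A2), not the constant $c_4^\cA$ inside the $M$ on the right, so the term $\int_\Omega M(x,c_4^\cA\xi)\,dx$ --- which may be $+\infty$ for a non-doubling $M$ even when $\int_\Omega M(x,c_1^\cA\xi)\,dx$ is finite --- remains untouched.

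The paper avoids this obstruction by \emph{not} estimating a modular of $\cA(\cdot,\xi)$ at all. It instead uses the identification $\|\cA(\cdot,\xi)\|_{L_{M^\ast}(\Omega)} = \|\cA(\cdot,\xi)\|_{(E_M(\Omega))^\ast}$ and bounds $\int_\Omega \cA(x,\xi)\cdot w\,dx$ for bounded test functions $w$ with $\max\{c_1^\cA,c_4^\cA\}\|w\|_{L_M(\Omega)}\leq 1$. The monotonicity (A3) --- which your argument never uses --- lets one drop $-\int_\Omega(\cA(x,\xi)-\cA(x,w))\cdot(\xi-w)\,dx\leq 0$, and after Fenchel--Young the only place where (A2) is invoked with the constant $c_4^\cA$ is inside $M(x,c_4^\cA w)$, \emph{not} $M(x,c_4^\cA\xi)$; since $w$ is normalized in $L_M$, Lemma~\ref{lem:LM}(1) controls $\int_\Omega M(x,\max\{c_1^\cA,c_4^\cA\}w)\,dx\leq 1$. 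The $\xi$-dependence enters only through $\cA(x,\xi)\cdot\xi$ and $M(x,c_1^\cA\xi)$, which are the quantities actually controlled by the hypotheses. You would need to rework your proof along these lines (or otherwise invoke (A3)) to close the gap.
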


\begin{proof}
Let $w \in E_M(\Omega)$. Since $\cA$ is a Carath\'eodory's function, we see that $\cA(x,w)$ is bounded and thus belongs to $E_{M^\ast}(\Omega)$. Now by a direct calculation and (A3)
\begin{align*}
    \int_\Omega \cA(x,\xi) \cdot w \, dx  &= -\int_\Omega (\cA(x,\xi) - \cA(x,w)) \cdot (\xi -w) \, dx + \int_\Omega \cA(x,\xi) \cdot \xi\, dx \\
    &\quad - \int_\Omega \cA(x,w)\cdot (\xi-w) \, dx \\
    &\leq \int_\Omega \cA(x,\xi) \cdot \xi\, dx - \int_\Omega \cA(x,w)\cdot (\xi-w) \, dx.
\end{align*}
The first term on the right-hand side is bounded by assumption.
For the second term we use Fenchel--Young inequality, (A2) and convexity to estimate 
\begin{align*}
    -\int_\Omega \cA(x,w)\cdot (\xi-w) \, dx &= -\frac{2}{c_1^\cA c_3^\cA}\int_{\Omega}c_3^\cA \cA(x,w)\cdot  \left(\frac{c_1^\cA(\xi-w)}{2}\right)\,dx\\ 
    & \leq \frac{2}{c_1^\cA c_3^\cA}\int_{\Omega} {M^\ast}(x,c_3^\cA \cA(x,w)) +M\left(x,\frac{c_1^\cA (\xi-w)}{2}\right)\,dx\\
    & \leq \frac{2}{c_1^\cA c_3^\cA} \int_{\Omega}\frac{1}{c_2^\cA}  M(x,c_4^\cA w) +\frac{1}{c_2^\cA} h_2(x) +M(x,c_1^\cA \xi)+M(x,c_1^\cA w)\,dx\\
    &\leq \frac{2}{c_1^\cA c_3^\cA}\left[ \left(\frac{1}{c_2^\cA}+1\right)\int_{\Omega}M(x,\max\{{c_1^\cA,c_4^\cA}\}w)\,dx\right.\\
    &\left.\qquad\qquad\ +\int_{\Omega} \cA(x,\xi) \cdot\xi+ h_1(x)+\frac{1}{c_2^\cA} h_2(x)\,dx\right].
 \end{align*}
In turn, 
\begin{align}
\nonumber    \int_\Omega \cA(x,\xi) \cdot w \, dx\leq  \frac{2 }{c_1^\cA c_3^\cA} &\left[\left(\frac{1}{c_2^\cA}+1\right)\int_{\Omega}M(x,\max\{{c_1^\cA,c_4^\cA}\}w)\,dx\right. \\
    &\left.\ +\left(\frac{c_1^\cA c_3^\cA}{2}+1\right)\int_\Omega \cA(x,\xi) \cdot \xi + h_1(x)+\frac{1}{c_2^\cA}h_2(x)\, dx\right]\,.\label{est}
\end{align}
Provided $\max\{{c_1^\cA,c_4^\cA}\}\|w\|_{L_M(\Omega)}\leq 1$, by Lemma~\ref{lem:LM}(1), we know that 
\begin{align*}
    \int_{\Omega}M(x,\max\{{c_1^\cA,c_4^\cA}\}w)\,dx \leq \max\{{c_1^\cA,c_4^\cA}\}\|w\|_{L_M(\Omega)}\leq 1.
\end{align*}
Using it in~\eqref{est} we obtain
\begin{align*}
    \|\cA(x,\xi)\|_{(E_M(\Omega))^\ast} &= \max\{c_1^\cA,c_4^\cA\}\sup\left\{\int_{\Omega}\cA(x,\xi)\cdot  w\,dx:\quad \max\{c_1^\cA,c_4^\cA\}\|w\|_{L_M(\Omega)}\leq 1 \right\} \\
    &\leq \frac{2\max\{c_1^\cA,c_4^\cA\}}{c_1^\cA c_3^\cA} \left[\left(\frac{1}{c_2^\cA}+1\right) +\left(\frac{c_1^\cA c_3^\cA}{2}+1\right)\int_\Omega \cA(x,\xi) \cdot \xi + h_1(x)+\frac{1}{c_2^\cA}h_2(x)\, dx\right] \\
    &\leq C.
\end{align*}
Thus $\cA(x,\xi)$ stays bounded in $L_{M^\ast}(\Omega)$.
\end{proof}

\section{Existence and uniqueness for problem (\ref{eq:main})}

We are ready to prove our main Theorem.



\begin{proof}[Proof of Theorem~\ref{theo:main}]
We base our proof on Galerkin approximations.  We fix spaces $V_n$ and functions $u_n$ as in Definition~\ref{def:Galerkin}.
We shall prove that there exists a weak solution of the problem \eqref{eq:main} being a limit of a subsequence of the Galerkin solutions $\{u_n\}$. We divide our proof into four steps.\newline

\textbf{Step 1} According to the Lemma~\ref{lem:Galerkinsolution}, we get
\begin{equation}\label{eq:galerkin2}
    \int_\Omega \cA(x,\nabla u_n)\cdot \nabla \omega +\Phi(u_n)\cdot \nabla \omega+ b(x,u_n)\omega\, dx = \int_\Omega F \cdot \nabla \omega \, dx\quad\text{
for all $\omega \in V_k$, $k\leq n$.}
\end{equation} By Lemma~\ref{lem:Galerkin-bounded} we have that $\|\nabla u_n\|_{L_M(\Omega)} \leq C$ and $\|\cA(\cdot,\nabla u_n)\cdot \nabla u_n\|_{L^1(\Omega)}\leq C$, where $C$ is independent of $n$. 
By the Banach--Alaoglu Theorem, we know that $\{\nabla u_n\}_n$ is weakly-$\ast$ compact in $L_M(\Omega)$. By Lemma~\ref{lem:uniformly-integrable} and the fact that $M$ is an $N$-function, we see that $\{\nabla u_n\}_n$ is uniformly integrable in $L^1(\Omega,\R^d)$. The Dunford--Pettis Theorem \cite[Theorem 4.3]{Bre11} implies that $\left\{\nabla u_{n}\right\}_{n}$ is relatively compact in the weak topology of $L^1(\Omega,\R^d)$. Taking into accont the Rellich-Kondrashev theorem and Poincar\'e inequality there exists a function $u \in W^{1,1}_0(\Omega)$ such that \[\text{$u_n \rightharpoonup u\quad$ in $\quad W^{1,1}(\Omega)$.}\] Since the weak-$\ast$ limit is unique, we see that additionally $\nabla u \in V^1_0L_M(\Omega)$  and \[\text{$\nabla u_n \stackrel{*}\rightharpoonup  \nabla u\quad$ for $\quad\sigma(L_M,E_{M^\ast})$.}\] Also, by Lemma~\ref{lem:cA-bounded}, we see that $\cA(\cdot,\nabla u_n)$ is bounded in $L_{M^\ast}(\Omega)$ and a similar argument shows the existence of $h \in L_{M^\ast}(\Omega)$ such that
\begin{equation}\label{limA}
    \cA(x,\nabla u_n)\stackrel{*}\rightharpoonup  h\quad \text{ for }\quad \sigma(L_{M^\ast}, E_M).
\end{equation}


As $b$ is a Carath\'eodory's function, we get $b(x,u_n) \to b(x, u)$ almost everywhere in $\Omega$. Fix $\varepsilon>0$ and choose $\ell$ large enough to guarantee $\frac{C}{\ell} < \frac{\varepsilon}{2}$ for $C$ from  Lemma~\ref{lem:Galerkin-bounded}. Since $b(\cdot,\ell)\in L^1(\Omega)$ is a given function independent of $n$, for any subset $E\subset \Omega$ with $|E|$ small enough, we have $\|b(\cdot,\ell)\|_{L^1(E)} < \frac{\varepsilon}{2}$. Now
\begin{align*}
    \int_{E} b(x,u_n) \, dx &= \int_{E\cap \{|u_n|<
    \ell\}} b(x,u_n) \, dx + \int_{E\cap \{|u_n| \geq \ell\}} b(x,u_n) \, dx \\
    &\leq \int_{E\cap \{|u_n|<\ell\}} |b(x,\ell)| \, dx + \int_{E\cap \{|u_n| \geq \ell\}} \frac{1}{|u_n|} |b(x,u_n) u_n| \, dx \\
&\leq \int_{E} |b(x,\ell)| \, dx + \int_{\{|u_n| \geq \ell\}} \frac{1}{\ell} |b(x,u_n) u_n| \, dx \\
     &\leq \int_{E} |b(x,\ell)| \, dx + \frac{C}{\ell} < \varepsilon.
\end{align*}
  Therefore $\{b(\cdot,u_n)\}$ is uniformly integrable.
By Vitali convergence theorem (Lemma~\ref{lem:vitali}) we have
\begin{equation}\label{limb}
   b(\cdot, u_n) \to b(\cdot, u) \text{ in } L^1(\Omega). 
\end{equation}
As $\Phi$ is continuous and belongs to $L^\infty(\Omega,\R^d)$, we immediately see that \begin{align}
    \label{limPhi} \text{$\Phi(u_n) \to \Phi(u)$ weakly in $L^1(\Omega,\R^d)$}
\end{align} due to dominated convergence theorem.

Having \eqref{limA}, \eqref{limPhi}, and \eqref{limb}, we can pass to the limit in~\eqref{eq:galerkin2} with $n$. In turn we get
\begin{equation}\label{eq:step2-1}
    \langle F, \nabla \omega \rangle = \int_{\Omega} h \cdot \nabla \omega +\Phi(u)\cdot \nabla \omega + b(x,u)\omega \, dx\quad\text{for any $\omega \in V_k$, $k\in\N$}.
\end{equation}

\textbf{Step 2} (extending the class of test functions): Let $\varphi \in C_c^\infty(\Omega)$ be arbitrary and $\varphi_j \in V_j$ be a sequence of smooth function such that $\varphi_j \to \varphi$ in $C^1_c(\Omega)$. {Due to~\eqref{eq:step2-1} we have 
\begin{align}\label{ngeqj}
\begin{split}
    \langle F, \nabla \varphi_j \rangle &= \int_{\Omega} 
    h\cdot \nabla \varphi_j +\Phi(u)\cdot \nabla \varphi_j + b(x,u)\varphi_j \, dx\quad\text{for any $j$.}
\end{split}
\end{align}}

Now $h \cdot \nabla \varphi_j \to h \cdot \nabla \varphi$ almost everywhere and by uniform convergence of $\{\nabla \varphi_j\}$ we have for large enough $j$
\begin{equation*}
    \int_\Omega  h \cdot \nabla \varphi_j \ dx \leq \int_\Omega h \cdot \nabla \varphi + 1 \, dx < \infty.
\end{equation*}
Since~\eqref{ngeqj}, $\Phi(u)$ is bounded, $b(\cdot,u)\in L^1(\Omega)$, by dominated convergence theorem applied similarly also to all the other terms in  we have
\begin{align}\label{eq:h-for-smooth2}
\begin{split}
   \langle F, \nabla \varphi \rangle = \lim_{j \to \infty} \langle F, \nabla \varphi_j\rangle  &= \lim_{j\to \infty} \int_\Omega h \cdot \nabla
   \varphi_j+\Phi(u)\cdot \nabla \varphi_j+ b(x,u) \varphi_j \, dx  \\
   &= \int_\Omega h \cdot \nabla \varphi +\Phi(u)\cdot \nabla \varphi + b(x,u) \varphi \, dx.
\end{split}
\end{align}

Let $v \in V^1_0L_M(\Omega)\cap L^\infty(\Omega)$ and $\{v_k\} \subset C_c^\infty(\Omega)$ be a sequence such that $\nabla v_k \to \nabla v$ modularly in $L_M(\Omega)$, $v_k \to v$ in $L^1(\Omega)$ and $\|v_k\|_{L^\infty(\Omega)} \leq c \|v\|_{L^\infty(\Omega)}$ with $c$ independent of $k$. This is possible because of the modular density assumption ~(Lemma~\ref{lem:approximation}). 

The modular convergence of $\{\nabla v_k\}$  implies that $\nabla v_k \to \nabla v$ for $\sigma(L_M, L_{M^\ast})$ by Lemma \ref{lem:modular-weak-conv}. Note that
\begin{equation}\label{eq:lots-of-info}
\text{$F \in E_{M^\ast}(\Omega) \subset L_{M^\ast}(\Omega)$, $\quad h \in L_{M^\ast}(\Omega)$, $\quad\Phi(u)\in L^\infty(\Omega,\R^d)\subset E_{M^\ast}(\Omega)$}\end{equation}  
and since $b(\cdot, u) \in L^1(\Omega)$, Lemma \ref{lem:weak-conv} implies that 
\begin{equation}\label{eq:galerkin3}
    \lim_{k\to \infty}\int_\Omega b(x,u) v_k \, dx \to \int_\Omega b(x,u)v \, dx. 
\end{equation} Using~\eqref{eq:lots-of-info} and~\eqref{eq:galerkin3} we can extend \eqref{eq:h-for-smooth2} as
\begin{align}\label{eq:h-for-sobolev2}
\begin{split}
    \langle F, \nabla v\rangle = \lim_{k \to \infty} \langle F,\nabla v_k \rangle &=\lim_{k \to \infty} \int_\Omega h \cdot \nabla v_k +\Phi(u)\cdot \nabla v_k+b(x,u)v_k \, dx \\
    &= \int_\Omega h \cdot \nabla v+\Phi(u)\cdot \nabla v +b(x,u) v \, dx 
\end{split}
\end{align}
for test functions $v \in V^1_0L_M(\Omega) \cap L^\infty(\Omega)$.\newline

\textbf{Step 3} ($h = \cA(x,\nabla u)$ almost everywhere): Let $w \in L^\infty(\Omega,\R^d)$ be arbitrary. By (A3) we have
\begin{align*}
    0 &\leq \limsup_{n \to \infty} \int_{\Omega} (\cA(x,\nabla u_n)  - \cA(x,w)) \cdot (\nabla u_n - w) \, dx \\
    &= \limsup_{n\to \infty}\bigg(\int_{\Omega} \cA(x,\nabla u_n) \cdot \nabla u_n \, dx - \int_\Omega \cA(x,\nabla u_n)\cdot w \, dx \\
    &\qquad \qquad  - \int_{\Omega} \cA(x,w) \cdot \nabla u_n \, dx + \int_{\Omega} A(x,w)\cdot w \, dx\bigg) \\
    &\leq I_1 + I_2 + I_3 + I_4.
\end{align*}
Let us handle the integrals one by one.  Before we estimate the $I_1$, we would like to point out that, as $u_n\to u$ a.e. in $\Omega$, $b:\Omega\times\R\to\R$ is a Carath\'eodory's function
satisfying the sign condition, using the Fatou lemma, we have 
\[\int_{\Omega}b(x,u)u\,dx\leq \liminf_{n\to \infty}\int_{\Omega}b(x,u_n)u_n\,dx\leq C.\]

In the case of $I_1$,  as $u_n$
is a Galerkin solution, $F \in E_{M^\ast}(\Omega)$, taking the test function as $u_n$ in  \eqref{eq:Galerkin} and combining with Lemma \ref{lem:PHI}, we deduced that

\begin{align*}\label{eq:I_1}
\begin{split}
I_1 &= \limsup_{n \to \infty} \int_\Omega \cA(x,\nabla u_n) \cdot \nabla u_n \, dx\\
&=\limsup_{n \to \infty} \int_{\Omega}\bigg(F \cdot \nabla u_n-b(x,u_n)u_n-\Phi(u_n)\cdot \nabla u_n\bigg)\,dx\\
&\leq \limsup_{n \to \infty} \int_\Omega F \cdot \nabla u_n \,dx -\liminf_{n\to\infty}\int_{\Omega}b(x,u_n)u_n\, dx\\
 &= \int_{\Omega}F\cdot \nabla u\,dx-\liminf_{n\to\infty} \int_{\Omega}b(x,u_n)u_n\,dx.
 \end{split}
 \end{align*}
According to the fact that $\nabla T_k(u) \to \nabla u$ modularly in $L_M(\Omega)$, we obtain  

\begin{align*}
\begin{split}
 &\int_{\Omega}F\cdot \nabla u\,dx-\liminf_{n\to\infty} \int_{\Omega}b(x,u_n)u_n\,dx=\lim_{k \to \infty} \int_\Omega F\cdot \nabla T_k(u)\,dx-\liminf_{n\to\infty} \int_{\Omega}b(x,u_n)u_n\,dx.
 \end{split}
 \end{align*}
In addition, in the step ~$2$, we have used the approximation theory (lemma~\ref{lem:approximation})  to extend the test function of \eqref{eq:h-for-sobolev2} to hold for any $v\in V^1_0L_M(\Omega)\cap L^\infty(\Omega)$, so taking the test function as $T_k(u)$ in \eqref{eq:h-for-sobolev2} and  combining with Lemma \ref{lem:PHI}, we obtain
 \begin{align*}
   \begin{split}
     &\lim_{k \to \infty} \int_\Omega F\cdot \nabla T_k(u)\,dx-\liminf_{n\to\infty} \int_{\Omega}b(x,u_n)u_n\,dx \\
 &\quad=\lim_{k \to \infty} \int_\Omega h \cdot \nabla T_k(u) +b(x,u)T_{k}(u)+\Phi(u)\nabla T_{k}(u)\,dx-\liminf_{n\to\infty} \int_{\Omega}b(x,u_n)u_n\,dx\\
 &\quad =\int_{\Omega} h \cdot \nabla u \, dx+\int_{\Omega}b(x,u)u\,dx-\liminf_{n\to\infty} \int_{\Omega}b(x,u_n)u_n\,dx\\
 &\quad \leq \int_{\Omega} h \cdot \nabla u \, dx.
   \end{split}  
 \end{align*}
Therefore, we deduce that
\begin{eqnarray}\label{eq:I_1}
    I_1=\limsup_{n \to \infty} \int_\Omega \cA(x,\nabla u_n) \cdot \nabla u_n \, dx\leq \int_{\Omega} h \cdot \nabla u \, dx.
\end{eqnarray}
For $I_2$, recalling the fact ~\eqref{limA}, it is easy to check that 
\begin{equation}\label{eq:I_2}
  \ I_2=\limsup_{n \to \infty} \int_\Omega -\cA(x,\nabla u_n)\cdot w \, dx =-\int_\Omega h \cdot w \, dx.
\end{equation}
In addition, according to the fact that $\cA(\cdot,w) \in L^\infty(\Omega,\R^d)$, we have
\begin{equation}\label{eq:I_3}
 I_3 = \limsup_{n \to \infty}\int_\Omega -\cA(x,w) \cdot \nabla u_n \, dx = -\int_\Omega \cA(x,w) \cdot \nabla u \, dx.
\end{equation}
Lastly, 
$I_4$ does not depend on $n$ or $k$.

All in all, combining \eqref{eq:I_1}--\eqref{eq:I_3} we see that
\begin{align*}
    0 \leq \int_\Omega (h - \cA(x,w)) \cdot (\nabla u - w) \, dx.
\end{align*}
Now the monotonicity trick (Lemma~\ref{lem:mon-trick}) yields that $h = \cA(x,\nabla u)$ almost everywhere in $\Omega$. Plugging this to \eqref{eq:h-for-sobolev2} we see that $u_n$ is a weak solution to equation \eqref{eq:main}.\newline

\textbf{Step 4} (uniqueness): We prove uniqueness of the weak solution under the additional assumptions that $s \mapsto b(x,s)$ is strictly increasing for almost every $x \in \Omega$ and $\Phi$ is Lipschitz continuous. We take an approximation of a Heaviside function and pass to the limit, see \cite{uniq,GWWZ-2012,WZ-2010} for similar proofs. 

Assume that there exists two weak solutions $u^1$ and $u^2$. Let us take an approximation of a Heaviside function $H(t) = \chi_{\{t \geq 0\}}$ given by
\begin{align*}
    H_\delta (t): =
    \begin{cases}
    0, &\  t <0, \\
    \tfrac{1}{\delta} t,& \ 0\leq t \leq \delta, \\
    1,& \  t>\delta.
    \end{cases}
\end{align*}
Let $k \in \N$ be arbitrary and test the equation \eqref{eq:main} with $\varphi = H_\delta (u^1-u^2) \in V^1_0L_M(\Omega) \cap L^\infty(\Omega)$ to get
\begin{equation*}
    \int_\Omega \big(\cA(x,\nabla u^i) + \Phi(u^i) \big)\cdot \nabla \varphi + b(x,u^i) \varphi \, dx = \int_\Omega F \cdot \nabla \varphi \, dx,\quad i=1,2\,.
\end{equation*} 
Subtracting the equations we receive
\begin{align*}
    J_1+J_2+J_3&:=\frac{1}{\delta} \int_{\{0\leq u^1-u^2 \leq \delta\}} \big(\cA(x,\nabla u^1) - \cA(x,\nabla u^2)\big) \cdot \nabla (u^1-u^2) \, dx \\
    &\quad + \frac{1}{\delta} \int_{\{0\leq u^1-u^2 \leq \delta\}} \big(\Phi(u^1)-\Phi(u^2)\big) \cdot \nabla (u^1-u^2) \, dx \\
    &\quad + \int_\Omega (b(x,u^1)-b(x,u^2))H_\delta(u^1-u^2) \, dx = 0.
\end{align*}

We note that $J_1 \geq 0$ for each $\delta$ due to (A3). For $J_2$ we estimate with Lipschitz continuity of $\Phi$
\begin{align*}
    J_2 &\leq \frac{1}{\delta} \int_{\{0\leq u^1-u^2 \leq \delta\}} |\Phi(u^1)-\Phi(u^2)| |\nabla (u^1-u^2)| \, dx
    \leq \frac{1}{\delta} \int_{\{0\leq u^1-u^2 \leq \delta\}} L_\Phi \delta |\nabla (u^1-u^2)| \, dx \to 0
\end{align*}
as $\delta \to 0$ as we integrate over a shrinking domain. Lastly we note that due to Lemma~\ref{lem:weak-conv} we have
\begin{align*}
    \lim_{\delta \to 0} J_3 = \int_\Omega (b(x,u^1)-b(x,u^2))H(u^1-u^2) \, dx.
\end{align*}
Dropping the non-negative $J_1$ and taking into account limits of $J_2$ and $J_3$ we see that 
\begin{align*}
    \int_\Omega (b(x,u^1)-b(x,u^2))H(u^1-u^2) \, dx \leq 0
\end{align*}
so that $(b(x,u^1)-b(x,u^2))H(u^1-u^2) =0$ almost everywhere due to sign condition of $b$. 
Since $b$ is strictly increasing with respect to the second variable we see that $u^1\leq u^2$. Considering $\varphi = H_\delta(u^2-u^1)$ yields the opposite inequality and thus $u^1=u^2$.
\end{proof}

\printbibliography{}
\end{document}